\def\mysep{******************************}
\newtheorem{theorem}{Theorem}
\newdefinition{remark}{Remark}
\newproof{proof}{Proof}
\newproof{proofoft}{Proof of Theorem \ref{thm2}}
\def\d{{\rm d}}
\newcommand{\R}{\mathbb{R}}
\newcommand{\E}{\mathbb{E}}
\newcommand{\Espace}{\mathcal E}
\newcommand{\Jspace}{E}
\newcommand{\Jspacek}{{\Jspace}_k}
\newcommand{\Jspacep}{{\mbox{$E^+$}}}
\newcommand{\Jspacem}{{\mbox{$E^-$}}}
\newcommand{\Jspacepm}{{\mbox{$E^\pm$}}}
\newcommand{\diag}{\mathrm {diag}}
\newcommand{\col}{\mathrm {col}}
\renewcommand{\t}{\top}
\def\Agen{\mathcal{A}}
\def\Adomain{\mathcal{D}}
\renewcommand{\Pr}{\mathbb{P}}
\let\Min\wedge
\let\Max\vee
\newcommand{\bs}{\boldsymbol}
\newcommand{\matDelta}{\bs\Delta}
\newcommand{\matTheta}{\bs\Theta}
\newcommand{\matI}{{\mathbb I}}
\newcommand{\matO}{{\mathbb O}}
\newcommand{\matA}{{\bs A}}
\newcommand{\matM}{{\bs M}}
\newcommand{\matE}{{\bs E}}
\newcommand{\matC}{{\bs C}}
\newcommand{\matP}{{\bs P}}
\newcommand{\matF}{{\bs F}}
\newcommand{\matG}{{\bs G}}
\newcommand{\matL}{{\bs L}}
\newcommand{\matS}{{\bs S}}
\newcommand{\matQ}{{\bf Q}}
\newcommand{\matX}{\bs X}
\newcommand{\matJ}{\bs J}
\newcommand{\matLambda}{\bs \Lambda}
\newcommand{\matGamma}{\bs \Gamma}
\newcommand{\matD}{{\bs D}}
\newcommand{\matDp}{\matD^+}
\newcommand{\matbD}{{\bar{\bs D}}}
\newcommand{\matbDp}{\matbD^+}
\newcommand{\mattD}{{\tilde{\bs D}}}
\newcommand{\matU}{{\bs U}}
\newcommand{\matUm}{\matU^-}
\newcommand{\matbU}{{\bar{\bs U}}}
\newcommand{\matbUm}{\matbU^-}
\newcommand{\mattU}{{\tilde{\bs U}}}
\newcommand{\vecPi}{\vec \Pi}
\newcommand{\vecpi}{\vec \pi}
\newcommand{\vecb}{\vec b}
\newcommand{\vecc}{\vec c}
\newcommand{\vecu}{\vec u}
\newcommand{\veck}{\vec k}
\def\Ito{It\^{o}}
\begin{document}
\title{Two-sided reflected Markov-modulated Brownian motion with applications
to fluid queues and dividend payouts}
\author{Bernardo D'Auria\corref{cor}\fnref{fnD}}
\ead{bernardo.dauria@uc3m.es}
\ead[url]{http://www.est.uc3m.es/bdauria}
\address{Universidad Carlos III de Madrid,%
Avda. Universidad, 30%
28911 Legan\'es (Madrid) Spain.}
\author{Offer Kella\fnref{fnK}}
\ead{Offer.Kella@huji.ac.il}
\ead[url]{http://pluto.mscc.huji.ac.il/~mskella/kella.html}
\address{Department of Statistics,%
The Hebrew University of Jerusalem,%
Jerusalem 91905, Israel.}
\cortext[cor]{Corresponding author}
\fntext[fnD]{This research has been partially supported by the Spanish Ministry of
Education and Science Grants MTM2007-63140, MTM2010-16519, SEJ2007-64500 and RYC-2009-04671.}
\fntext[fnK]{Supported in part by grant 434/09 from the Israel Science Foundation and the Vigevani Chair in Statistics}
\begin{keyword}
Markov modulation \sep Brownian motion \sep dividend payout \sep two sided reflection
\end{keyword}
\begin{abstract}
In this paper we study a reflected Markov-modulated Brownian motion with a two sided reflection in which the drift, diffusion coefficient and the two boundaries are (jointly) modulated by a finite state space irreducible continuous time Markov chain. The goal is to compute the stationary distribution of this Markov process, which in addition to the complication of having a stochastic boundary can also include jumps at state change epochs of the underlying Markov chain because of the boundary changes. We give the general theory and then specialize to the case where the underlying Markov chain has two states. Moreover, motivated by an application of optimal dividend strategies, we consider the case where the lower barrier is zero and the upper barrier is subject to control. In this case we generalized earlier results from the
case of a reflected Brownian motion to the Markov modulated case.
\end{abstract}
\maketitle
\section{Introduction}
A double sided reflected process, say at zero from below and some positive level $b$, is a
reasonable model for a storage process where the stored quantity has to be nonnegative and the
buffer size is limited. When borrowing or backlogging is allowed, then the lower barrier could also be negative. There is a huge literature on such processes, in particular when the driving process (before reflection) is Brownian motion. Less attention is given to the case where the boundaries are themselves stochastic processes. For most papers on this topic the focus was on showing the existence and uniqueness of solution of the related Skorohod problem. A recent study which refers to many of the earlier results in this particular direction is \cite{nystrom:onkog:2010} where the focus is on multidimensional models. 
For the one dimensional double sided reflection (non-modulated case), we mention the important
results reported in \cite{kruk:lehoczky:ramanan:shreve:2007} and references therein.

Very little work is done related to the computation of the stationary distribution of such processes when more explicit stochastic structure is assumed, especially when the boundaries are not smooth.
One example of such a study is given in \cite{kella:boxma:mandjes:2006} where the driving process is L\'{e}vy and there is only one lower boundary which increases linearly and then drops back to zero at arrival epochs of a Poisson process.

We are not aware of any results for the case where the boundary together with the driving process
are jointly modulated by some other process. This is motivated by situations in which the buffer
size and the allowed backlog are allowed to change from time to time as a response to changes in the
driving process which are caused by changes in an underlying environment.

In this paper we model the environment as a finite state space irreducible continuous time Markov
chain. When in a given state, our process behaves like a two sided reflected Brownian motion with
drift and diffusion coefficient as well as lower and upper boundaries which are allowed to depend on
this state. The main goal is to give a computational scheme for computing the joint stationary
distribution of the buffer content and the state of the underlying environment.

The paper is organized as follows. 
In Section~\ref{sec.model} we present the general model and provide some
preliminary results. 
Section~\ref{sec.stat-dist} is about the stationary joint distribution of the buffer content and the underlying environment. 
Section~\ref{sec:fluid-queues} specialized the results to various cases where the lower
barrier is zero (no backlog) and the underlying environment changes between two states. Under some conditions, for this case we also show how  to compute the distribution of some regenerative epoch associated with this process. 
Finally, in Section~\ref{sec:dividend-payout} we generalize results of \cite{gerber:shiu:2004}, who considered the upper barrier as a cutoff point above which a company
must pay dividends that are modeled by the regulating process at this upper barrier.
\section{Model}\label{sec.model}
Let $W=\{W(t)|\ t\ge 0\}$ and $J=\{J(t)|\ t\ge 0\}$ be two independent processes where $W$ is a
Wiener process (a standard Brownian motion) and $J$ is an irreducible and homogeneous
continuous time Markov chain with state space $\Jspace = \{1,\ldots,N\}$. We assume that $J$ has
right continuous sample paths and we denote by $\matQ=(q_{ij})$ its rate transition matrix, by
$\vecpi=(\pi_i)$ its stationary distribution and define $\matP=\diag[\vecpi]$.
For each $i\in\Jspace$ we let $a(i) \le b(i)$ be two finite real numbers which define the upper
and lower barriers when in state $i$.

It is standard to show that there is a unique process $(Z,L,U)$ satisfying
\begin{equation}\label{eq:(Z,J).def}
Z(t)=Z(0)+\int_0^t \sigma(J(s)) \, dW(s) + \int_0^t \mu(J(s)) \, ds + L(t) - U(t)
\end{equation}
where $a(J(t))\le Z(t)\le b(J(t))$ for each $t\ge 0$,
$L$ and $U$ are nondecreasing right continuous processes with $L(0-)=U(0-)=0$,
\begin{equation}\label{eq:L.U}
\int_0^\infty \Big( Z(s) - a( J(s) ) \Big) \, dL(s)=0 
\quad \mbox{ and } \quad
\int_0^\infty \Big( b( J(s) ) - Z(s) \Big) \, dU(s)=0\ .
\end{equation}
$Z$ is the two-sided (Skorohod) reflection of the modulated process 
\begin{equation}\label{eq:X.proc}
X(t) = Z(0) + \int_0^t \sigma(J(s)) \, dW(s) + \int_0^t \mu(J(s)) \, ds 
\end{equation}
at the modulated barriers $[a(i),b(i)]$, $i\in\Jspace$. We denote by $\kappa = \sum_i \mu(i) \pi_i$
the asymptotic drift of the process $X(t)$. 

Although $Z$ is not Markovian, $(Z,J)$ is. Let us identify its generator.

Let $f(w,i)$ be a bounded twice continuously differentiable function in $w$ satisfying 
$f'(a(i),i)=f'(b(i),i)=0$.
We note that clearly there always is a twice continuously differentiable 
$h:\R^2 \to \R$ such that $h(w,i)=f(w,i)$. 
The generalized It\^o formula for semimartingales 
(e.g. Theorem 33 on p. 8 of \cite{protter:2004}) 
now implies after some obvious manipulations that
\begin{eqnarray}\label{eq:Ito.formula}
f(Z(t),J(t))&=& f(Z(0),J(0)) + \int_0^t\sigma(J(s))f'(Z(s),J(s)) \, dW(s)\\
&&+\int_0^t \frac{\sigma^2(J(s))}{2}f''(Z(s),J(s))+\mu(J(s))f'(Z(s),J(s)) \, ds \nonumber \\
&&+\int_0^t f'(Z(s),J(s)) \, d(L^c(s)-U^c(s)) + \sum_{0<s\le t}\Delta f(Z(s),J(s))\nonumber
\end{eqnarray}
where for c\`adl\`ag functions of bounded variation on compact sets $g$ we denote
$g(s-)=\lim_{u\uparrow s}g(u)$, $\Delta
g(s)=g(s)-g(s-)$ and $g^c(s)=g(s)-\sum_{0<u\le s}\Delta g(u)$.
Now, observe that  $f'(a(i),i)=0$ and the first relation in (\ref{eq:L.U}) imply that
\begin{equation}
\int_0^t f'(Z(s),J(s)) \, dL^c(s)=\int_0^t f'(a(J(s)),J(s)) \, dL^c(s)=0 \nonumber
\end{equation}
and similarly $\int_0^t f'(Z(s),J(s)) \, dU^c(s)=0$.

Next, note that the only way that $s$ can be a jump epoch of $f(Z(\cdot),J(\cdot))$ is if it is a
jump epoch of $J$. If $a(J(s))\le Z(s-)\le b(J(s))$ then clearly $Z(s)=Z(s-)$. If either
$Z(s-)<a(J(s))$ or $Z(s-)>b(J(s))$, then in the first case necessarily $Z(s)=a(J(s))$ and in the
second $Z(s)=b(J(s))$, therefore we can always write $Z(s)= a(J(s)) \vee Z(s-) \wedge b(J(s)) $.
This implies that
\begin{equation}
\Delta f(Z(s),J(s))
=\hat f(Z(s-),J(s)) - \hat f(Z(s-),J(s-))\ , \nonumber
\end{equation}
where we used the notation $\hat f(w,i)= f(a(i) \Max w \Min b(i),i)$.
Now for each $i\not=j$ let $N_{ij}$ be independent Poisson process with rate $q_{ij}$, independent
of $W$, such that if an arrival finds $J$ in state $i$ then it instructs $J$ to jump to $j$ and
otherwise nothing happens. Now, $N_{ij}(s)-q_{ij}s$ is a martingale with respect to the
filtration generated by $W$ and $N_{ij}$ for $i\not=j$ and $\hat f(Z(s),i)$ are bounded processes. Hence, 
\begin{equation}
\int_{[0,t]} \big( \hat f(Z(s-),j) - \hat f(Z(s-),i) \big)
    1_{\{J(s-)=i\}}  \, d(N_{ij}(s)-q_{ij}s) \nonumber
\end{equation}
is a martingale and thus, by summing over $i,j\in\Jspace$, with $i\not=j$, and noting that
$q_{ii}=-\sum_{j\not=i}q_{ij}$, this implies that
\begin{equation}
\sum_{0<s\le t}\Delta f(Z(s),J(s))-\int_0^t \sum_{j \in \Jspace} q_{J(s-),j}
\hat f(Z(s-),j) \, ds \nonumber
\end{equation}
is a martingale with respect to the filtration generated by $W$ and the Poisson processes $N_{ij}$,
but also with respect to the filtration generated by $W$ and $J$, since it is adapted to that
filtration and $N_{ij}$ are non-anticipative with respect to it 
(see also \cite{rogers:williams:2000},  Lemma V.21.13).
\begin{myc1}
It could be useful to define $N_{ii} \equiv 0$, with $i\in\Jspace$, in order to avoid the condition
$i\not=j$ when summing over both indexes. The process $J$ may be expressed in terms of the
underlying Poisson processes in the following way
\begin{eqnarray*}
J(t) - J(0) = \sum_{0<s\le t} \Delta J(s)
&=& \sum_{i,j\in\Jspace} \int_{[0,t]} (j-i) 1_{\{J(s-)=i\}} \, dN_{ij}(s)\\
&=& \sum_{i,j\in\Jspace} \int_{[0,t]} (J(s)-J(s-)) 1_{\{J(s-)=i\}} \, dN_{ij}(s)
\end{eqnarray*}
and similarly any function of it
\begin{eqnarray*}
f(J(t)) - f(J(0)) = \sum_{0<s\le t} \Delta f(J(s))
&=& \sum_{i,j\in\Jspace} \int_{[0,t]} (f(j)-f(i)) 1_{\{J(s-)=i\}} \, dN_{ij}(s)\\
&=& \sum_{i,j\in\Jspace} \int_{[0,t]} (f(J(s))-f(J(s-)) 1_{\{J(s-)=i\}} \, dN_{ij}(s),
\end{eqnarray*}
where $f$ is any function defined over $\Jspace$.
From this it follows easily that
\begin{eqnarray*}
\sum_{0<s\le t}\Delta f(Z(s),J(s))
&=& \sum_{i,j\in\Jspace}
\int_{[0,t]}\Big(f(Z(s),J(s))-f(Z(s-),J(s-))\Big)1_{\{J(s-)=i\}} \, dN_{ij}(s)\\
&=& \sum_{i,j\in\Jspace}
\int_{[0,t]}\Big(f(a(J(s))\vee Z(s-)\wedge b(J(s)),J(s))-f(Z(s-),J(s-))\Big)
    1_{\{J(s-)=i\}} \, dN_{ij}(s)\\
&=& \sum_{i,j\in\Jspace}
\int_{[0,t]}\Big(f(a(j)\vee Z(s-)\wedge b(j),j)-f(Z(s-),i)\Big)1_{\{J(s-)=i\}} \, dN_{ij}(s)
\end{eqnarray*}
In addition notice that
\begin{eqnarray*}
\lefteqn{\sum_{\stackrel{i,j\in\Jspace}{i\not=j}} \int_{[0,t]}\Big(f(a(J(s))\vee Z(s-)\wedge
b(J(s)),J(s))-f(Z(s-),J(s-))\Big)
    1_{\{J(s-)=i\}} \,  q_{ij} \, ds }\\
&=&\sum_{\stackrel{i,j\in\Jspace}{i\not=j}} \int_{[0,t]}\Big(f(Z(s),J(s))-f(Z(s-),J(s-))\Big)
    1_{\{J(s-)=i\}} \,  q_{J(s-),j} \, ds \\
&=&\sum_{j\in\Jspace}\int_0^t f(Z(s),j) \sum_{i\not=j} 1_{\{J(s-)=i\}} \,  q_{J(s-),j} \, ds
 -\sum_{i\in\Jspace}\int_0^t f(Z(s-),i) 1_{\{J(s-)=i\}} \,  \sum_{j\not= i} q_{J(s-),j} \, ds \\
&=&\sum_{j\in\Jspace}\int_0^t f(Z(s),j) 1_{\{J(s-)\not=j\}} \,  q_{J(s-),j} \, ds
 +\sum_{i\in\Jspace}\int_0^t f(Z(s-),i) 1_{\{J(s-)=i\}} \,  q_{J(s-),i} \, ds \\
&=&\sum_{j\in\Jspace}\int_0^t f(Z(s),j) 1_{\{J(s-)\not=j\}} \,  q_{J(s-),j} \, ds
 +\sum_{j\in\Jspace}\int_0^t f(Z(s-),j) 1_{\{J(s-)=j\}} \,  q_{J(s-),j} \, ds\\
&=&\sum_{j\in\Jspace}\int_0^t f(Z(s),j)  \,  q_{J(s-),j} \, ds.
\end{eqnarray*}
\end{myc1}
Since $f'(w,i)$ are locally bounded, it follows that the following is a martingale with
respect to the filtration generated by $W$ and $J$:
\begin{eqnarray*}
M(t) &=& \sum_{0<s\le t}\Delta f(Z(s),J(s)) -\int_0^t \sum_{j \in \Jspace} q_{J(s),j}
\hat f(Z(s),j) \, ds \\ && +\int_0^t\sigma(J(s))f'(Z(s),J(s)) \, dW(s)\ .
\end{eqnarray*}
Rewriting equation (\ref{eq:Ito.formula}) in terms of the above martingale we have
\begin{eqnarray}\label{eq:Stroock-Varadhan}
f(Z(t),J(t))-f(Z(0),J(0))&=&\int_0^t \Agen f(Z(s),J(s)) \, ds + M(t) \nonumber
\end{eqnarray}
where we denoted by $\Agen f(z,i)$ the following operator
\begin{equation}\label{eq:generator}
 \Agen f(z,i)=\frac{1}{2}\sigma^2(i) f''(z,i)+\mu(i) f'(z,i)
     +\sum_{j \in \Jspace} q_{ij} \hat f(z,j), \quad (z,i)\in\Espace \ .
\end{equation}

This gives the expression for the generator of $(Z,J)$ restricted
to the set of bounded functions twice continuously differentiable on the continuous component and
satisfying $f'(a(i),i)=f'(b(i),i)=0$, $i\in\Jspace$.

We denote by $\Espace=\bigcup_{i\in\Jspace} [a(i),b(i)]\times \{i\}$
the range of values assumed by the process $(Z,J)$ and define the following subsets of the state
space of $J$,
$$
    \Jspacep = \{j \in E; \sigma(j) >0 \mbox{ or } \mu(j) > 0\} \ , 
\quad \mbox{ and } \quad 
    \Jspacem = \{j \in E; \sigma(j) >0 \mbox{ or } \mu(j) < 0\} \ .
$$
\begin{myc1}
Denoting by
$$\Adomain=\{f \in C_b^1(\R\times\Jspace): f'(a(i),i)=f'(b(i),i)=0, i\in\Jspace\}$$
we see that $\Adomain \subset \Adomain(\Agen)$.
Actually it is a proper subset of the domain of the generator and alto it does not form a core for
$\Agen$.
Indeed if we take $f\in\Adomain$ we have that $\Agen f \not \in \Adomain$.

However given $\Agen$ as in (\ref{eq:generator}) defined on $\Adomain$, it satisfies the positive
maximum principle, \cite[see][pag.165]{ethier:kurtz:1986}, and having that $\Adomain$ is dense in
$C_b^0(\R\times\Jspace)$, i.e. the set of all bounded continuous functions on
\hbox{$\R\times\Jspace$,}  by Theorem IV.5.4 pag. 199 in \cite{ethier:kurtz:1986} there exists a
solution of the martingale problem for $\Agen$.
In addition since $\Agen 1\{\Espace\} = 0$, by Remark IV.5.5 the solution lives on
$D_{\Espace}[0,\infty)$.

Actually one possible solution is the process $(Z, J)$ defined in (\ref{eq:(Z,J).def}). It follows
by noticing that equation (\ref{eq:Stroock-Varadhan}) is equal to formula (1.30) in Proposition
IV.1.7 page 162 in \cite{ethier:kurtz:1986}.
However if we extend equation (\ref{eq:generator}) to any $(z,i)\in\R\times\Jspace$, then we get
that the solution of the martingale problem is not unique and does not live only on $\Espace$.
There are at least two possible Markov processes that solve the martingale problem for $\Agen$.
One solution is $(Z,J)$, i.e. assume that the process starts in $(Z(0),J(0)$ with $Z(0)\not \in
[a(J(0)),b(J(0))]$. Then the process moves till hitting the boundary or till a change of the state
$J(0)$ and then remains confined in $\Espace$. The other solution is the opposite one.
It starts in $\Espace$ and moves to the boundary where it is obligated to live outside, at least
till a change of the state of the environment $J$.

To make the story easier consider the standard Brownian motion on the line reflected at the origin.
The generator is $\frac{1}{2} f''(x)$ for any function that has $f'(0)=0$.

If we consider $\R$ as the range of the process, the generator defined as above is not enough to
specify the process.
Indeed we could have that the reflection is only allowed to live either on $\R^+$ or $\R^-$.
For example, consider the second case, starting in \hbox{$x\in\R^+$.} the process moves as the
Brownian
motion till hitting the level $0$, i.e. $T_0$, and then continues as $-|B(t-T_0)|$.
What distinguishes the two processes are the domains of the corresponding generators.
The domain of the generator of the process that eventually lives on $\R+$ contains all the functions
that have null right derivative in $0$, while the generator of the process that eventually
lives on $\R^-$ contains in the domain all the functions that have null left derivative in $0$.

The question now is, given the following operator
\begin{equation}\label{eq:extended.generator}
\Agen f(z,i)=\frac{1}{2}\sigma^2(i) \hat f''(z,i)+\mu(i) \hat f'(z,i)
    +\sum_{j \in \Jspace} q_{ij} \hat f(z,j), \quad (z,i)\in\R\times\Jspace
\end{equation}
and noticing that the process jumps immediately in $\Espace$, due to the \^{} operator applied to
the first and second derivatives of $f$, there is no doubt about where the process lives.
Therefore, is the restriction $\Agen_{|\Adomain}$ enough to uniquely determine the process
$(Z,J)$? Or in other words, is the closure of $\Agen_{|\Adomain}$ as a linear operator over
$C_b^0(\R\times\Jspace)$ (we could address the same question using the definition
(\ref{eq:generator}) over $C_b^0(\Espace)$) unique? [\emph{I'm not going to answer this
question in this paper}]
\end{myc1}
\section{Stationary Distribution}\label{sec.stat-dist}
If $i\in\Jspacem$, $a(i) \leq Z(0) \leq b(i)$ and $J(0)=i$, then the probability of hitting $a(i)$ before $J$ changes state is bounded below by the positive probability of this event starting from $Z(0) = b(i)$ and $J(0) = i$. This observation together with a geometric retrial argument, recalling that $J$ is irreducible, implies that $(Z,J)$ is a regenerative process with finite mean regeneration epochs. A similar argument can be made for $i\in\Jspacep$. 
If $\Jspacem \cup \Jspacep = \emptyset$ then the process is just a deterministic function of $J$ and thus clearly positive recurrent. Thus, in any case a unique stationary distribution exists. 

We show that if a solution to (\ref{eq:sys.diff.eq}) exists then it must be the (unique) stationary distribution. Later we will show how to construct it. 
%
\begin{theorem}\label{thm:stat.dist}
 The stationary distribution of the process $(Z,J)$ is the unique solution of the following system
of differential equations
\begin{equation}\label{eq:sys.diff.eq}
  \frac{1}{2} \sigma^2(i) \, \Pi''_i(z) -\mu(i) \, \Pi'_i(z)
  + \sum_{j\in\Jspace} q_{ji} \, \Pi_j(a(j) \Max z \Min b(j)) = 0
    \quad a(i) \leq z \leq b(i)
\end{equation}
with boundary conditions 
$\Pi_i(a(i))= 0$ and $\Pi_i(b(i))=\pi_i$, $i\in\Jspace$.
\end{theorem}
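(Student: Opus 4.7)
The plan is to exploit the stationary generator identity: in equilibrium, $\E[\Agen f(Z,J)] = 0$ for every $f$ in the domain of $\Agen$ characterised in Section~\ref{sec.model}, i.e., for $f$ bounded and $C^2$ in its continuous argument with $f'(a(i),i) = f'(b(i),i) = 0$. Writing $\Pi_i(z) = \Pr(Z \le z, J = i)$ for the joint stationary distribution (whose existence and uniqueness are granted by the regenerative argument preceding the theorem) and substituting the explicit form~(\ref{eq:generator}), this identity reads
$$
\sum_{i\in\Jspace} \int_{[a(i),b(i)]} \Big(\tfrac{1}{2}\sigma^2(i) f''(z,i) + \mu(i) f'(z,i)\Big)\, \d\Pi_i(z) + \sum_{i,j\in\Jspace} q_{ij} \int \hat f(z,j)\, \d\Pi_i(z) = 0.
$$

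Next I would integrate by parts twice in the diffusion term, transferring the derivatives of $f$ onto $\Pi_i$; both rounds of boundary contributions vanish by the Neumann-type conditions on $f$. For the jump term I would swap the dummy indices $i \leftrightarrow j$ and use the truncation $\hat f(z,i) = f(a(i)\Max z \Min b(i), i)$ to rewrite
$$
\sum_{i,j\in\Jspace} q_{ji} \int f\bigl(a(i)\Max z \Min b(i), i\bigr)\, \d\Pi_j(z) = \sum_i \int_{[a(i),b(i)]} f(y,i)\, \d\Lambda_i(y),
$$
where the signed measure $\Lambda_i$ on $[a(i),b(i)]$ has distribution function $\sum_{j\in\Jspace} q_{ji}\,\Pi_j(a(j)\Max y \Min b(j))$, i.e., the precise coupling term of~(\ref{eq:sys.diff.eq}). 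Collecting all contributions produces $\sum_i \int f(z,i)\, \d\nu_i(z) = 0$ for a signed measure $\nu_i$ built from $\Pi_i$ and $\Lambda_i$; specialising to $f(z,k) = \delta_{ik}\varphi(z)$ with $\varphi$ smooth, compactly supported in $(a(i),b(i))$, and satisfying the Neumann conditions, and then varying $\varphi$, forces $\nu_i \equiv 0$ on the interior and yields~(\ref{eq:sys.diff.eq}) pointwise. The boundary values $\Pi_i(a(i)) = 0$ and $\Pi_i(b(i)) = \pi_i$ are then simply the record that $\Pi_i$ is a (sub)distribution function on $[a(i),b(i)]$ with total mass $\Pr(J=i) = \pi_i$.

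For uniqueness, the stationary distribution is itself unique, so any solution of~(\ref{eq:sys.diff.eq}) satisfying the prescribed boundary data must coincide with it; alternatively, (\ref{eq:sys.diff.eq}) can be recast as a coupled linear two-point boundary value problem whose well-posedness follows from standard linear ODE arguments. The delicate point in the approach above is handling the atoms that $\Pi_i$ can carry at $a(i)$ and $b(i)$, arising both from the reflection mechanism and, more importantly, from the forced repositioning $Z \leftarrow a(J)\Max Z \Min b(J)$ at environment-change epochs: the integration by parts produces Dirac contributions at these points, while the jump term through $\hat f$ generates matching evaluations of $f$ on the boundary, and it is precisely the truncation $a(j)\Max y \Min b(j)$ built into~(\ref{eq:sys.diff.eq}) that allows these contributions to cancel so that the equation reduces to a clean pointwise ODE in the interior.
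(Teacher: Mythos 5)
Your computation is essentially the one the paper performs---the stationarity identity $\sum_{i\in\Jspace}\int_{a(i)}^{b(i)}\Agen f(z,i)\,\d\Pi_i(z)=0$, integration by parts with the Neumann conditions on $f$ killing the boundary terms, the change of variables $\int_{a(i)}^{b(i)}\Pi_i(z)\,\d\hat f(z,j)=\int_{a(j)}^{b(j)}\Pi_i(a(i)\Max z\Min b(i))\,\d f(z,j)$, and the swap of the indices $i$ and $j$---but you run it in the opposite logical direction. The paper shows that any solution of (\ref{eq:sys.diff.eq}) with the stated boundary data satisfies (\ref{eq:inv.distr.and.genA}) and is therefore a stationary distribution, hence equals \emph{the} stationary distribution by the uniqueness already established via the regenerative argument (existence of a solution being deferred to the explicit construction in the next subsection). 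You instead go from stationarity to the differential system by a variational argument in $f$. That direction is legitimate, but it leaves you with the burden of showing that (\ref{eq:sys.diff.eq}) admits \emph{no other} solution, and this is where your argument has a genuine gap: the sentence ``the stationary distribution is itself unique, so any solution of (\ref{eq:sys.diff.eq}) must coincide with it'' is a non sequitur. Uniqueness of the stationary distribution rules out a second stationary distribution, not a second solution of the ODE system; to close the loop you need precisely the converse implication that the paper proves and you omit. Your fallback---that the system is a linear two-point boundary value problem whose well-posedness is ``standard''---is not adequate either: the boundary data are split across the endpoints and across the subintervals $I_k$, and the paper needs the whole of the following subsection (substochasticity of $\matLambda_k^{\pm}$, a Schur complement, and the Levy--Desplanques theorem) to prove that the resulting linear system is invertible.

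Two further points of care in your forward direction. First, your bookkeeping of what is integrated against $f$ versus against $\d f$ is off: after the paper's manipulations every term sits against $\d f(z,i)$ (see (\ref{eq:int.eq})), so testing with $f(z,k)=\delta_{ik}\varphi(z)$, $\varphi$ compactly supported in the interior, only yields that the left-hand side of (\ref{eq:sys.diff.eq}) is \emph{constant} on each $(a(i),b(i))$ (since $\varphi'$ ranges over mean-zero test functions), and an extra argument is needed to pin that constant to zero; your claim to obtain $\sum_i\int f(z,i)\,\d\nu_i(z)=0$ with $\Lambda_i$ having the coupling term as its distribution function would instead put the \emph{derivative} of that term into the equation. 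Second, you correctly flag the atoms of $\Pi_i$ at $a(i)$ and $b(i)$ as the delicate point, but you do not actually resolve it; in the paper's direction this is absorbed painlessly because the truncated arguments together with $\Pi_j(a(j))=0$ and $\Pi_j(b(j))=\pi_j$ are simply substituted into (\ref{eq:generator}) before integrating.
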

\begin{proof}
The stationary distribution satisfies the following equation for any function $f$ belonging to the
domain of the generator $\Agen$,
\begin{equation}\label{eq:inv.distr.and.genA}
\sum_{i\in\Jspace} \int_{a(i)}^{b(i)} \Agen f(z,i) \, \d\Pi_i(z) = 0 \ ,
\end{equation}
Let $f$ be any twice continuous differentiable function on $\Espace$ with bounded support,
then using integration by parts we get that equation (\ref{eq:inv.distr.and.genA}) 
reduces to 
\begin{equation}\label{eq:int.eq}
\sum_{i\in\Jspace} \int_{a(i)}^{b(i)} \frac{1}{2} \sigma^2(i) \ \Pi''_i(z) - \mu(i)\ \Pi'_i(z) 
    \ \d f(z,i)
    + \sum_{i\in\Jspace} \int_{a(i)}^{b(i)} \sum_{j\in\Jspace} q_{ij} \ \Pi_i(z)  
    \ \d \hat f(z,j) = 0 \ .
\end{equation}
Noting that 
$$\int_{a(i)}^{b(i)} \Pi_i(z)  \ \d \hat f(z,j) = \int_{a(j)}^{b(j)} \Pi_i(a(i) \Max z \Min b(i))  
    \ \d f(z,j)$$
and interchanging the indexes $i$ and $j$ in the two sums in the last term of
(\ref{eq:int.eq}) we get that (\ref{eq:sys.diff.eq}) implies (\ref{eq:inv.distr.and.genA}). 
\end{proof}

The way of solving the system (\ref{eq:sys.diff.eq}) is to divide the interval
$[\min_i\{a(i)\},\max_i\{b(i)\}]$ to disjoint subintervals, 
to get a solution of the above system in each of them and then 
to appropriately glue together all these partial solutions.
For this we set $l_0=\min_i\{a(i)\}$ and by 
$l_{k+1}=\min_i\{a(i)>l_k\} \Min \min_i\{b(i)>l_k\}$ 
and we define the closed intervals $I_k=[l_{k-1},l_k]$, $k=1,\ldots,K$.

Fix one of these subintervals, say $I_k$, and let 
$\Jspacek=\{i\in\Jspace: a(i) \le l_{k-1} < l_k \le b(i)]\}$ be the set of states active over $I_k$.
The restriction of the system (\ref{eq:sys.diff.eq}) to the subinterval $I_k$ then reads as follows
\begin{equation}\label{eq:sys.diff.eq.Ik}
  \frac{1}{2} \sigma^2(i) \, \Pi''_i(z) -\mu(i) \, \Pi'_i(z)
  + \sum_{j\in\Jspacek} q_{ji} \, \Pi_j(z) = c_k(i)
    \quad l_{k-1} \le z \le l_k, \ i\in\Jspacek
\end{equation}
where we set $c_k(i)=\sum_{j:b(j)\le l_k} q_{ji} \, \pi_j$.
The equivalent matrix form of (\ref{eq:sys.diff.eq.Ik}) is the following
\begin{equation}\label{eq:vec.sys.diff.eq.Ik}
\matS_k \vecPi_k''(z) -  \matM_k \vecPi_k'(z) + \matQ_k^\t \vecPi_k (z) = \vecc_k
\end{equation}
where 
$\matS_k = \diag[i\in\Jspacek:\sigma^2(i)/2]$, 
$\matM_k = \diag[i\in\Jspacek:\mu(i)]$, 
$\matQ_k = [(i,j\in\Jspacek:q_{ij})]$ and where 
$\vecPi_k(z)$ and $\vecc_k$ denote the restrictions of the vectors $\vecPi(z)$ and $\vecc$
to the only states active over $I_k$.
$[\cdot]^\t$ is used to denote the transposition operator.

If the set of active states over $I_k$ is a proper subset of $\Jspace$ we have that $\matQ$ is
a strictly substochastic matrix and this has an inverse, therefore the system
(\ref{eq:vec.sys.diff.eq.Ik}) admits the constant 
$\veck_k = [\matQ^{-1}]^\t \ \vecc$ 
as particular solution.
In the case $\Jspacek = \Jspace$, $\matQ$ reduces to the rate transition matrix of
$J$ that is stochastic and singular. For this case the constant $\vecc$ is zero,
the system (\ref{eq:vec.sys.diff.eq.Ik}) is homogeneous and the particular solution 
$\veck_k = \vec0$
is the zero constant. 

Adding to the particular solution the homogeneous solution, we can always write 
the general solution of (\ref{eq:vec.sys.diff.eq.Ik}) in the following form,
see \cite{gohberg:lancaster:rodman:1982},
\begin{equation}\label{eq:Jord.Pair}
\vecPi_k(z) = \matGamma_k e^{\matLambda_k z}  \ \vecu_k + \veck_k. 
\end{equation}
where $(\matGamma_k,\matLambda_k)$ is a Jordan pair of the matrix polynomial
(\ref{eq:vec.sys.diff.eq.Ik}) and $\vecu_k$ is the unknown vector that can be determined 
by the boundary conditions.
For the specific case of second order matrix polynomial, 
a Jordan pair consists of a pair of matrix with the following 
properties
$$ 
\matS_k \, \matGamma_k \, \matLambda_k^2 
    + \matM_k \, \matGamma_k \, \matLambda_k
	+ \matQ_k \, \matGamma_k = \matO $$
and the rank of $\col[\matGamma_k, \matGamma_k \, \matLambda_k]$ is maximum.

To be able to glue together the solutions of all intervals $I_k$, we will see later
that it is necessary to be able to solve a system of equations, and this requires 
to prove that there exists the inverse of the matrix associated to the system.

Let $\matP_k$ be the restriction of the matrix $\matP$ to the states active on $I_k$,
the main ingredient follows from a  slight generalization of the results in
\cite{dauria:ivanovs:kella:mandjes:2010} where it is shown that the systems
\begin{equation}\label{eq:vec.sys.eq.Ik}
\matS_k \, \matG \, \matL^2  
    \mp  \matM_k \, \matG \, \matL
	+ \matP_k^{-1} \matQ_k^\t \matP_k \, \matG  =
\matO 
\end{equation}
admit solutions $(\matGamma^\pm_k,\matLambda^\pm_k)$
that are unique under the restriction that the matrices $\matLambda^\pm$ are substochastic. 
In this case the matrices $\matGamma^\pm$ are transition matrices and in particular 
$\matE^\pm \ \matGamma^\pm = \matI^\pm$ where the projection matrices $\matE^\pm$ are defined as the
submatrices of the identity matrix $\matI_k$ constructed by selecting only the rows corresponding to
the states contained in the set $\Jspacek \cap \Jspacepm$. The substochastic nature of the
matrices  $\matLambda^\pm$ is the special characteristic that we are going to
use later to prove the non singularity in the final system of equation.

The solutions $(\matGamma^\pm_k,\matLambda^\pm_k)$ allow to immediately construct the Jordan
Pair in (\ref{eq:Jord.Pair}) as follows
\begin{eqnarray}\label{eq:Jordan.Pair.special}
\matGamma_k = [\matP_k\matGamma^+_k,\matP_k\matGamma^-_k]; && 
\matLambda_k = \diag[\matLambda^+_k,-\matLambda^-_k] \ .
\end{eqnarray}
This construction is always valid but in the case when $\matLambda^\pm_k$ have 
in common the null eigenvalue. This happens only in the case $\Jspace_k = \Jspace$ and the 
asymptotic drift of the modulated process $X(t)$ is zero, that is $\kappa = 0$, see also
Section~7 in \cite{dauria:ivanovs:kella:mandjes:2010}. 
We are going to exclude this case as it has no added difficulty 
if not the one of making more complex all formulas.

Using the special selected Jordan pair in (\ref{eq:Jordan.Pair.special}),
the solution (\ref{eq:Jord.Pair}) over the interval $I_k$ can be rewritten in the
following form
\begin{equation}\label{eq:interval.sol}
\vecPi_k(z) = 
  \matP_k \, \matGamma_k^+ e^{+\matLambda_k^+ z}  \ \vecu_k^{\ +} 
+ \matP_k \, \matGamma_k^- e^{-\matLambda_k^- z} \ \vecu_k^{\ -}
+ \veck_k \ ,
\end{equation}
and it is fully specified after assigning the unknown boundary values 
\begin{equation}\label{eq:bound.values}
\Pi_i(l_{k-1}+) \mbox{ for any } i \in \Jspacek \cap \Jspacep
\quad \mbox{ and } \quad 
\Pi_i(l_{k}-) \mbox{ for any } i \in \Jspacek \cap \Jspacem \ .
\end{equation} 

To glue together the solution over the entire interval 
$[l_0,l_K]$
it is then necessary to solve for the unknown boundary conditions in
(\ref{eq:bound.values}) by using the constraints
$\Pi_i(a(i))= 0$ and $\Pi_i(b(i))=\pi_i$, for any $i\in\Jspace$,
together with the additional conditions on the continuity of the distribution functions
\begin{subeqnarray}
 \Pi_i(a(j)-) &=& \Pi_i(a(j)+) \quad a(i)<a(j)<b(i)\\
 \Pi_i(b(j)-) &=& \Pi_i(b(j)+) \quad a(i)<b(j)<b(i)
\end{subeqnarray} 
for any $i\in\Jspacep \cup \Jspacem$ and $j\in\Jspace$, 
and on the differentiability of the distribution functions
\begin{subeqnarray} 
 \Pi'_i(a(j)-) &=& \Pi'_i(a(j)+) \quad 	a(i)<a(j)<b(i)\\
 \Pi'_i(b(j)-) &=& \Pi'_i(b(j)+) \quad  a(i)<b(j)<b(i)
\end{subeqnarray} 
for any $i\in\Jspacep \cap \Jspacem$ and $j\in\Jspace$.
\subsection{Computing the stationary distribution}
In this section we show how to determine all the unknowns required to get the unique stationary
distribution.
We are going to consider the sequence of subintervals $I_k$, with $k = 1,\ldots,K$, and we 
represent the solution $\vecPi_k(z)$ in the $k$-th interval according 
to the form (\ref{eq:Jord.Pair}). It is worth to say that when applying the model to specific
cases it is possible to compute numerically the Jordan Pair in (\ref{eq:Jord.Pair}) or in its
special form (\ref{eq:Jordan.Pair.special}), like it is shown for example in \cite{rogers:shi:1994}
or in \cite{asmussen:1995}. We used Wolfram Mathematica\copyright,
to get some of the analytical solutions shown in the examples of Sections~\ref{sec:fluid-queues}
and~\ref{sec:dividend-payout}, but for numerical
computation any technical computing system, such as  Matlab\copyright, can be efficiently used
for this purpose.

In the same way as we defined the projection matrices $\matE_k^\pm$ we define the 
projection matrices for the states that belong to the intersection of the intervals $I_{k-1}$ and
$I_k$, that is the matrices $\matD_k$, $\matDp_k$ and $\mattD_k$. 
They are defined as the submatrices of the identity matrix $\matI_k$ constructed by selecting only
the rows corresponding to the states contained respectively in the sets 
$\Jspace_k \cap \Jspace_{k-1}$, $(\Jspace_k \cap \Jspace_{k-1}) \cap \Jspacep$ and
$(\Jspace_k \cap \Jspace_{k-1}) \cap (\Jspacep \cap \Jspacem)$.
For the states in $\Jspace_k$ that do not belong to the intersection $\Jspace_{k-1} \cap \Jspace_k$
we define the additional projection matrices $\matbD_k$ and $\matbDp_k$ defined as the submatrices
of the identity matrix $\matI_k$ constructed by selecting only the rows corresponding to the states
contained respectively in the sets 
$\Jspace_k \cap \bar \Jspace_{k-1}$, 
$(\Jspace_k \cap \bar \Jspace_{k-1}) \cap \Jspacep$.

Finally considering the intersection between the intervals $I_k$ and $I_{k+1}$ we define the 
corresponding projection matrices $\matU_k$, $\matUm_k$, $\mattU_k$, $\matbU_k$ and $\matbUm_k$
whose definitions are easy to guess. In all definitions we have assumed that
$\Jspace_{0}=\Jspace_{K+1}=\emptyset$.

Applying the continuity and differentiability constraints we get
$$\begin{array}{lll}
 \matbDp_k \ \matGamma_k e^{l_{k-1}\matLambda_k}\  \vecu_k &
    &= \matbDp_k \ \veck_k \\
 \matbUm_k \ \matGamma_k e^{l_k\matLambda_k}\  \vecu_k & 
    &= \matbUm_k \ ( \veck_k + \vecpi_k ) \\
 \matU_k \ \matGamma_k e^{l_k\matLambda_k}\  \vecu_k &
    - \matD_{k+1} \ \matGamma_{k+1} e^{l_k\matLambda_{k+1}} \ \vecu_{k+1}
    &=  \matU_k \ \veck_k - \matD_{k+1} \ \veck_{k+1} \\
 \mattU_k \ \matGamma_k \matLambda_k e^{l_k\matLambda_k}\  \vecu_k &
    -\mattD_{k+1}\ \matGamma_{k+1} \matLambda_{k+1} e^{l_k\matLambda_{k+1}}\ \vecu_{k+1}
    &=  \mattU_k \ \veck_k - \mattD_{k+1}\ \veck_{k+1}
\end{array}$$
for any $k=1,\ldots,K$, with $\vecpi_k=\col[\pi_i, i\in\Jspace_k]$, that defining
\begin{eqnarray*}
\matA^D_k &=& \col[
 \matbDp_k \ \matGamma_k e^{l_{k-1}\matLambda_k},
 \matbUm_k \ \matGamma_k e^{l_k\matLambda_k},
 \matU_k \ \matGamma_k e^{l_k\matLambda_k},
 \mattU_k \ \matGamma_k \matLambda_k e^{l_k\matLambda_k}
], \\
\matA^U_k &=& \col[
 \matO_{k+1},
 \matO_{k+1},
 \matD_{k+1} \ \matGamma_{k+1} e^{l_k\matLambda_{k+1}},
 \mattD_{k+1}\ \matGamma_{k+1} \matLambda_{k+1} e^{l_k\matLambda_{k+1}}
], \\
\vecb_k &=& \col[
    \matbDp_k \veck_k,\
    \matbUm_k (\veck_k+\vecpi_k),\
    \matU_k \veck_k - \matD_{k+1} \veck_{k+1},\
    \mattU_k \vec0_k
],
\end{eqnarray*}
can be rewritten as
$$\matA^D_k \, \vecu_k - \matA^U_k \, \vecu_{k+1} = \vecb_k \ .$$

Defining the block matrix $\matA$ whose block diagonal and upper block diagonal are made respectively 
of $\matA^D_k$ and $\matA^U_k$ with $k=1,\ldots,K$,
all the unknowns may be solved by resolving the linear system 
$\matA \vecu = \vecb$, with $\vecu = \col[\vecu_k,k=1,\ldots,K]$, and 
$\vecb = \col[\vecb_k,k=1,\ldots,K]$.

The assumption that $\Jspacep\cup\Jspacem$ is not empty and that we can choose
$i\in\Jspacep\cup\Jspacem$ such that $a(i)<b(i)$ implies that 
the matrix $\matA$ has dimension at least one. 
In addition, considering the interval $I_k$ such that 
$a(i)=l_{k-1}$ if $i\in\Jspacep$ (corr. $b(i)=l_k$ if $i\in\Jspacem$),
the corresponding $k$-block in the matrix $\matA$  contains a strictly positive submatrix
$\matbDp_k \matGamma_k e^{l_{k-1} \matLambda_k}$ 
(resp. $\matbUm_k \matGamma_k e^{l_k \matLambda_k}$ ).
Having that all other submatrices in the same rows are all zeros, we deduce that the matrix $\matA$ is invertible if the rank of the $k$-block column is maximum. It is then easy to realize by induction that $\matA$ is invertible if for all $k=1,\ldots,K$ the corresponding
$k$-block column has maximum rank.

Consider the $k$ block, in order to prove that its rank is maximum we can look at a square submatrix
with its dimension equals to the size of the block. Noticing that the matrices 
$\matDp_k$ (rep. $\matUm_k$) is a submatrix of $\matD_k$ (resp $\matU_k$) that
corresponds to the complementary states in $\Jspacep \cap \Jspace_k$ (resp. $\Jspacem \cap
\Jspace_k$) not selected by $\matbDp$ (resp. $\matbUm$), after reordering the rows we get that the
sought square submatrix is given by
$$\matA_k = 
\left(\begin{array}{ll}
  e^{l_{k-1} \matLambda_k^+} & \matE_k^+ \matP_k \matGamma^-_k  e^{-l_{k-1} \matLambda_k^-}\\
  \matE_k^-  \matP_k  \matGamma^+_k e^{l_k \matLambda_k^+} & e^{-l_k \matLambda_k^-}
\end{array}\right)\ .$$

The matrix $\matA_k$ is invertible if its Schur complement is.
It is given by     
$$e^{l_{k-1} \matLambda_k^+} \Big( \matI_k
- e^{-l_{k-1} \matLambda_k^+} \matE_k^+ \matP_k \matGamma^-_k  e^{-l_{k-1} \matLambda_k^-}
e^{-l_k \matLambda_k^+}
 \matE_k^- \matP_k \matGamma^+_k  e^{-l_k \matLambda_k^-}
\Big)
$$
and it is invertible by the Levy-Desplanques theorem (see Lemma B.1 in \cite{seneta:1981})
after noticing that all the matrices in the
last term of the equation above are substochastic with at least one strictly substochastic.
\section{Fluid queues with modulated buffer}\label{sec:fluid-queues}
One direct application of the model presented in the previous sections is the case of the
fluid queue with Markov modulated buffer.
Indeed if we assume that for all states the lower barrier is zero, 
i.e. $a(i)=0$ for any $i\in\Jspace$,
we can look at the process $Z$ as the buffer content of a fluid queue
whose net-input flows is the process $X(t)$ defined in (\ref{eq:X.proc}) 
and whose buffer is equal to $b(i)$ when the environment $J$ is in state $i\in\Jspace$.

In this simplified setting, it is easier to solve the system (\ref{eq:sys.diff.eq}),
as it is possible without loss of generality to order the buffer levels
in increasing order, i.e. $b(1)\leq b(2) \ldots \leq b(N)$.

While the results of previous section are completely general and allow in any case to compute
at least numerically the stationary distribution for any configuration of the barriers and the 
parameters of the Markov modulated Brownian motion, we limit the present discussion only to some
few cases where it is possible to write the general solution in an easy form.

It is worth mentioning that a closed form solution can be obtained for the general case  when $N=2$, but the expression is cumbersome and we prefer not to include it.
It requires solutions of forth-order algebraic equations, see Section~\ref{div.payout.ex} for an example of this type but applied to the dividend-payout problem.
For this reason we further limit our focus to more specific cases and provide the
solution for the case where the net-input flow is not modulated, i.e. the modulation 
applies only to the buffer levels, and for the case where only in one of the two states the diffusion coefficient is positive.
    \subsection{Two sided Markov modulated reflection of a $(\mu,\sigma)$-Brownian motion}
\label{BM-Queue}
In this section we analyze the case when $\Jspace=2$ and the drift and diffusion components, 
$\mu$ and $\sigma$, do not depend on the environment. The reflecting levels are given by
$a(1)=a(2)=0$ and $0<b(1)<b(2)$, and the system can be looked at as a fluid queue that for
exponential periods of times, i.e. when $J=1$, its buffer is reduced to a smaller value. 
A typical application for this could be a service station that for specified period of times
receives help from another service station with which it shares the buffer size. 
When the second system turns on, the buffer of the first station is reduced and the overflow fluid
becomes the starting content for the second station.
In the following we derive the stationary distribution of the system, 
and in the next subsection we compute the discontinuity rate of the content process
together with the size distribution of the discontinuities. 

The system (\ref{eq:sys.diff.eq}) can be solved by considering the two
intervals $I_1=[0,b(1)]$ and $I_2=[b(1),b(2)]$. For the interval $I_1$ we have that
\begin{equation}
\left\{\begin{array}{ll}
 \frac{1}{2} \sigma^2 \, \Pi_1''(z) - \mu \, \Pi'_1(z) 
    - q_{12} \, \Pi_1(z) + q_{21} \, \Pi_2(z) =& 0 \\ 
    \\		
 \frac{1}{2} \sigma^2 \, \Pi_2''(z) - \mu \, \Pi'_2(z) 
    + q_{12} \, \Pi_1(z) - q_{21} \, \Pi_2(z) =& 0 
\end{array} \right. 
\end{equation}
and for the interval $I_2$ we have 
\begin{equation}\label{eq:E=2.above.b1}
\hspace{1.15cm} \frac{1}{2} \sigma^2 \, \Pi''_2(z) - \mu \, \Pi'_2(z) 
    - q_{21} \, \Pi_2(z) = - q_{12} \pi_1
\end{equation}
In addition we have the boundary conditions $\Pi_i(0)=0$ and
$\Pi_i(b(i))=\pi_i=q_{3-i,i}/(q_{12}+q_{21})$, $i=1,2$, the continuity condition
$\Pi_2(b(1)-)=\Pi_2(b(1)+)$ and the differentiability condition
$\Pi'_2(b(1)-)=\Pi'_2(b(1)+)$.

Let $\matLambda_1^\pm = \matTheta_1 \pm \matDelta_1$ be matrix solutions of the equation 
$ \frac{1}{2} \sigma^2 \, \matL^2 \mp \mu \, \matL + \matQ = \matO$. In this case
since $\Jspace^+_1=\Jspace^-_1=\Jspace$ we have that $\matGamma^+_1=\matGamma^-_1=\matI$,
in addition $\matP_1=\diag[\pi_1,\pi_2]$. %
When $\mu < 0$ the matrices $\matLambda_1^\pm$
below are substochastic and are the same ones appearing in (\ref{eq:interval.sol}).
When $\mu > 0$ they are not substochastic but the result still holds. 
When $\mu=0$ it follows that the asymptotic drift $\kappa=0$
and the root $0$ has multiplicity $2$. As already mentioned before, for this case
the solution looks slightly different from the one given below, and it 
can be computed using similar arguments. We have decided to omit its expression here.

Let $\lambda_1^\pm$ be the negative solutions of the equations 
$ \frac{1}{2} \sigma^2 \, z^2 \mp \mu \, z - (q_{12}+q_{21}) = 0$.
Define 
$\Theta_1=(\lambda_1^+ + \lambda_1^-)/2=-(\sqrt{\mu^2+2 (q_{12}+q_{21}) \sigma^2})/\sigma^2$ 
and
$\Delta_1=(\lambda_1^+ - \lambda_1^-)/2 = \Delta = \mu/\sigma^2$,
then we have that
$$
\matTheta_1 = 
\Theta_1 \left(
\begin{array}{cc}
   \pi_2 &  - \pi_2 \\
 - \pi_1 &    \pi_1 
\end{array}
\right)
+
\Delta
\left(
\begin{array}{cc}
 \pi_1 & \pi_2 \\
 \pi_1 & \pi_2
\end{array}
\right) \ ,
$$
and $\matDelta_1 = \Delta \matI$.
Considering the continuity conditions at $z=0$ and $z=b(1)$ we can write the solution 
for $z \in I_1$ as
\begin{equation}
\left(\begin{array}{l}\Pi_1(z) \\ \Pi_2(z)\end{array}\right) = 
  e^{\Delta \, (z-b(1))} \, \matP_1 \, \sinh(\matTheta_1 \, z) \, 
  \sinh^{-1}(\matTheta_1 \, b(1)) \, \matP_1^{-1}
	\left(\begin{array}{c}\pi_1 \\ \Pi_2(b(1))\end{array}\right) 
\ .
\end{equation}
\begin{myc3}
 $$
\left(\begin{array}{cc} \pi_2 & - \pi_2 \\ - \pi_1 & \pi_1 \end{array}\right)
    =
    \left(\begin{array}{cc} 1 & - q_{12}/q_{21} \\ 1 & 1 \end{array}\right)
    \left(\begin{array}{cc} 0 & 0 \\ 0 & 1 \end{array}\right)
    \left(\begin{array}{cc} \pi_1 & - \pi_2 \\ - \pi_1 & \pi_1 \end{array}\right)
$$
\end{myc3}

Let $\lambda_2^\pm$ be the negative solutions of the equations 
$ \frac{1}{2} \sigma^2 \, z^2 \mp \mu \, z - q_{21} = 0$.
Define 
$\Theta_2=(\lambda_2^+ + \lambda_2^-)/2=-(\sqrt{\mu^2+2 q_{21} \sigma^2})/\sigma^2$ 
and
$\Delta_2=(\lambda_2^+ - \lambda_2^-)/2 = \Delta = \mu/\sigma^2$,
we can write the solution for $z \in I_2$ with boundary condition $\Pi_2(b(2))=\pi_2$ as
\begin{equation}\label{eq:E2.I1}
\Pi_2(z) = \pi_2 - e^{\Delta(z-b(1))} \frac{\sinh(\Theta_2(b(2)-z))}{\sinh(\Theta_2(b(2)-b(1)))}
	(\pi_2-\Pi_2(b(1))) \ . 
\end{equation}
In the equation above we already used the continuity condition for $z=b(1)$ by imposing the
boundary value at $b(1)$ to be equal to $\Pi_2(b(1))$.

The solution is completely identified by solving for the value of $\Pi_2(b(1))$ that assures 
the differentiability of $\Pi_2(z)$ at $z=b(1)$.

We have that, for $z \in I_1$
$$
\vec \Pi'(z) = \phantom{-} \Delta \, \vec \Pi(z) +
  e^{\Delta \, (z-b(1))} \, \matP_1 \, \matTheta_1 \, \cosh(\matTheta_1 \, z) \, 
  \sinh^{-1}(\matTheta_1 \, b(1)) \, \matP_1^{-1} \, \vec \Pi(b(1)) \ , 
$$
 and for $ z \in I_2$
$$
\Pi'_2(z) = -\Delta \, (\pi_2 - \Pi_2(z))%
    + e^{\Delta(z-b(1))} \Theta_2 %
	\frac{\cosh(\Theta_2(b(2)-z))}{\sinh(\Theta_2(b(2)-b(1)))}%
	(\pi_2-\Pi_2(b(1))) \ .
$$
To simplify the resulting expression define 
$k_i = \vec e^{\, \t}_2 \,\matP_1 \matTheta_1 \coth(\matTheta_1 \, b(1)) \matP_1^{-1}
\, \vec e_i$, $i=1,2$, where $\vec e_i$ is the $i$-th column vector of the canonical base
in $\R^2$, and $k_3= \Theta_2 \coth(\Theta_2(b(2)-b(1)))$,
then the value of $\Pi_2(z)$ at the boundary $b(1)$ is given by
\begin{equation}
 \Pi_2(b(1)) = \frac{\pi_2(k_3-\Delta)-\pi_1 k_1}{k_2 + k_3}
\end{equation}
\subsubsection{Mean inter-regeneration time and distribution of the content jump}
In this section we continue the analysis of the previous section and we are going to study the
process only at the moment of the discontinuities, that is when the environment jumps
from state $2$ to state $1$ and the buffer content just before the jump epoch is greater than
the buffer level $b(1)$. 

These special epochs are regeneration points for the process $Z(t)$, and we denote by $\tau$ the general interval of time that lags betweens two such discontinuities of $Z(t)$. We are going to exploit the regenerative structure of $Z(t)$ at its discontinuity points to determine 
the expectation of $\tau$ and the distribution of $Z(\tau-)$. 

This quantities may have relevance when studying an optimization problem. For example, if we consider the example mentioned before, the discontinuity may model the start-up content of a second server to which there can be associated start-up costs taken into account by a given cost function. The value of $1/\E[\tau]$ gives the average rate at which such costs incur.

Assume that $Z(0-)>b(1)$ and $Z(0)=b(1)$ and define $\tau=\inf_{t>0} \{ \Delta Z(t) < 0 \}$,
the picture below helps in understanding the given definition.
\begin{figure}[ht]%
\begin{center}%
%
%
\begin{tikzpicture}[x=4mm,y=4mm]
    \fill [fill=black!10!white] (0,0) rectangle (4,5);
    \fill [fill=black!10!white] (10,0) rectangle (13,5);
    \fill [fill=black!20!white] (4,0) rectangle (10,7);
    \fill [fill=black!20!white] (13,0) rectangle (18,7);
	\draw [dotted] (-2,7) -- (0,7) -- (0,5);
	\draw [dotted] (-2,0) -- (-0.5,0);
    \draw (0,5) -- (4,5) -- (4,7) -- (10,7) -- (10,5) -- (13,5) -- (13,7) -- (18,7) -- (18,5);
	\draw [->] (0,0) -- (18.8,0);
	\draw [dotted] (-2,5) .. controls (-1,6.5) and (-1.5,3.5) .. (-0.5,6);
	\draw [dotted] (-0.5,6) .. controls (0,7) and (-0.45,5) .. (0,6.2);
    \draw (0,5) .. controls (0.7,2.2) and (1,6) .. (1.3,3.7);
	\draw (1.3,3.7) .. controls (1.5,2.2) and (1.6,5) .. (1.7,5);
	\draw [very thick](1.7,5)(2.7,5);
	\draw (2.7,5) .. controls (3,4) and (3.5,1.1) .. (3.7,2);
	\draw (3.7,2) .. controls (4.3,5) and (4.6,0) .. (4.8,0);
	\draw [very thick](4.8,0)(5.8,0);
	\draw (5.8,0) .. controls (6.2,3) and (6.5,6.8) .. (6.8,4);
	\draw (6.8,4) .. controls (7.1,1.6) and (7.8,7) .. (7.9,7);
	\draw [very thick](7.9,7)(8.9,7);
	\draw (8.9,7) .. controls (9.2,6) and (9.5,4) .. (9.7,4.5);
	\draw (9.7,4.5) .. controls (10.1,5) and (10.5,3.2) .. (10.7,4.2);
	\draw (10.7,4.2) .. controls (11,5) and (11.3,2) .. (11.8,5);
	\draw [very thick](11.8,5)(12.5,5);
	\draw (12.5,5) .. controls (12.8,2.5) and (13.2,5.4) .. (13.5,5.2);
	\draw (13.5,5.2) .. controls (13.8,3.8) and (14.3,7) .. (14.5,7);
	\draw [very thick](14.5,7)(15,7);
	\draw (15,7) .. controls (15.4,3.5) and (15.8,0) .. (16,0);
	\draw [very thick](16,0)(16.4,0);
	\draw (16.4,0) .. controls (16.8,3) and (16.6,5.5) .. (17,3);
	\draw (17,3) .. controls (17.5,1) and (17.5,7) .. (18,5.7); 
		  \draw (0,0) -- (0,-0.3) node [below] {\tiny $0$};
		  \draw (4,0) -- (4,-0.3);
		  \draw (10,0) -- (10,-0.3);
		  \draw (13,0) -- (13,-0.3);
		  \draw (18,0) -- (18,-0.3) node [below] {\tiny $\tau$};
		  \draw (18,7) -- (18.3,7) node [right] {\tiny $b(2)$};
		  \draw (18,5) -- (18.3,5) node [right] {\tiny $b(1)$};
		  \draw [<-] (18,5.7) -- (18.6,5.7) node [right] {\tiny $Z(\tau)$};
    \end{tikzpicture}
%
\end{center}%
\end{figure}

Let $f(z)$ be any twice differentiable function, by \Ito's Lemma 
we have that
\begin{eqnarray}
    f(Z(t)) = f(Z(0)) &+& \sigma \int_0^t f'(Z(s)) dW(s) 
		+ \int_0^t \frac{1}{2}\sigma^2 f''(Z(s)) + \mu f'(Z(s)) \, ds \nonumber\\
		&+& f'(0) \, L(t) - f'(b(1)) \, U_1(t) - f'(b(2)) \, U_2(t)
\label{eq:Ito-formula.2states}
\end{eqnarray}
where $U_i(t)$, $i\in\Jspace$ is the amount of content lost at the barrier $b(i)$ during the
interval $[0,t)$. %
Evaluating (\ref{eq:Ito-formula.2states}) at $t=\tau$ under the condition that $Z(0)=b(1)$
and taking expectation we get that
\begin{eqnarray}
    \E_{b(1)}[f(Z(\tau))-f(b(1))] 
	&=& \E_{b(1)}\left[\int_0^\tau \frac{1}{2}\sigma^2 f''(Z(s)) 
	    + \mu f'(Z(s)) \, ds \right] \label{eq:average.at.tau}  \\
	&\phantom{=}&   + f'(0) \, \E_{b(1)}[L(\tau)] - f'(b(1)) \, \E_{b(1)}[U_1(\tau)] \nonumber
\\
	&\phantom{=}&   - f'(b(2)) \, \E_{b(1)}[U_2(\tau)] \phantom{\Big[}\nonumber 
\end{eqnarray}

Having that $\tau$ is a regeneration point for $Z$ we have by renewal theory that
$$ 
\Pi(z) 
= \Pr\{Z^* \leq z\} 
= \eta \, \E_{b(1)} \left[ \int_0^\tau 1_{\{Z(s)\leq z\}} \, ds\right],
$$ 
where we defined $\eta=1/E_{b(1)}[\tau]$, the rate of discontinuities of $Z$.

In general we have that 
\begin{equation}\label{eq:renewal}
\E[f(Z^*)] = \int_0^{b(2)} f(z) \, \Pi(dz) = 
\eta \, \E_{b(1)} \left[ \int_0^\tau f(Z(s)) \, ds\right] \ ,
\end{equation}
so that multiplying equation (\ref{eq:average.at.tau}) by $\eta$, applying (\ref{eq:renewal})
and using repeatedly integration by parts similarly to the proof of Theorem \ref{thm:stat.dist},
we get the following system of differential equations 
by collecting all the integrand terms with factor $f(z)$,
\begin{equation}\label{eq:diff.eq}
    \frac{1}{2} \sigma^2 \, \pi''(z) - \mu \, \pi'(z)	= \eta \, H'(z)  \,
     1_{\{b(1) < z \leq b(2)\}}
\end{equation}
where $H(z) = \Pr\{b(1) \leq Z(\tau-) \leq z\}$, with $b(1) < z \leq b(2)$, and
$\pi(z)=\Pi'(z)$.

Having that $\Pi(z)=\Pi_2(z) + \Pi_1(z \wedge b(1))$, by comparing the second equation 
in (\ref{eq:diff.eq}) with the derivative of equation (\ref{eq:E=2.above.b1}) we get that
\begin{equation}\label{eq:eta.H}  \eta \, H'(z) = q_{21} \, \Pi'_2(z)  \ .\end{equation}
Integrating last equation over the interval $[b(1),b(2)]$ with boundary conditions $H(b(1))=0$ and
$H(b(2))=1$ 
it follows that $  \eta  = q_{21} \Big(\pi_2 - \Pi_2(b(1)) \Big)$. 
Substituting its value in (\ref{eq:eta.H}) and integrating we get that, for 
$z \in I_2$,
$$  H(z) 
    =  \frac{\Pi_2(z)-\Pi_2(b(1))}{\pi_2 - \Pi_2(b(1))} 
    = 1- e^{\Delta(z-b(1))} \frac{\sinh(\Theta_2(b(2)-z))}{\sinh(\Theta_2(b(2)-b(1)))} 
 \ ,$$
where we used the expression of $\Pi_2(z)$ given in (\ref{eq:E2.I1}).

    \subsection{Two-state modulation with at least one with no diffusion component}
In this section we consider the case when $N=2$ and only in one of the two states
the diffusion coefficient is positive. 

The way to proceed to compute the stationary distribution is similar to the one used in the
previous section, but we decided to include these examples because for these cases the matrices 
$\Gamma^\pm$ are rectangular and do not reduce to the identity matrix.
In \cite{dauria:ivanovs:kella:mandjes:2010} and \cite{dauria:ivanovs:kella:mandjes:2010b}
the authors looked at how compute the stationary distribution for the case of two-side reflection
with two non-modulated barriers, but no explicit examples where given and as far as we know
they are not treated elsewhere.

The system (\ref{eq:sys.diff.eq}) can be solved by considering the two
intervals $I_1=[0,b(1)]$ and $I_2=[b(1),b(2)]$. We consider the two cases when
the state with no diffusion component is the first one, i.e. $\sigma_1=0$ and
then when the state with no diffusion is the second one, i.e. $\sigma_2=0$.

In the second case in order to have positive probability for the process to enter the interval
$(b(0),b(1)]$ we assume that $\mu_2>0$. To simplify the analysis and reduce the cases 
we assume that the asymptotic drift $\kappa<0$ and for the first case that $\mu_1<0$.

\begin{description}
\item[Case $\mu_1<0, \, \sigma_1=0;$ $\sigma_2>0;$ $\kappa<0$:]
For $z \in I_1$ we have that
$$
\left\{\begin{array}{ll}
 \phantom{\frac{1}{2} \sigma_1^2 \, \Pi_1''(z)} - \mu_1 \, \Pi'_1(z) 
    - q_{12} \, \Pi_1(z) + q_{21} \, \Pi_2(z) =& 0 \\ 
    \\		
 \frac{1}{2} \sigma_2^2 \, \Pi_2''(z) - \mu_2 \, \Pi'_2(z) 
    + q_{12} \, \Pi_1(z) - q_{21} \, \Pi_2(z) =& 0 
\end{array} \right. 
$$
and for $ \in I_2$ we have 
$$
\hspace{1.15cm} \frac{1}{2} \sigma_2^2 \, \Pi''_2(z) - \mu_2 \, \Pi'_2(z) 
    - q_{21} \, \Pi_2(z) = - q_{12} \pi_1 \ .
$$
In addition we have the boundary conditions $\Pi_i(0)=0$ and
$\Pi_i(b(i))=\pi_i=q_{3-i,i}/(q_{12}+q_{21})$, $i=1,2$, the continuity condition
$\Pi_2(b(1)-)=\Pi_2(b(1)+)$ and the differentiability condition
$\Pi'_2(b(1)-)=\Pi'_2(b(1)+)$.

Having $\mu_1<0$ it follows that $E^+=\{2\}$ and $E^-=\{1,2\}$. Hence defining 
$\gamma_1^+ = \Pr_1\{\sup_{t>0}\{X(t)\}=0\}$ we have that
$\matGamma_1^+ = (\gamma_1^+,1)^\t$ and $\matGamma_1^- = \matI^-$.

Solving the system (\ref{eq:vec.sys.eq.Ik}) that is explicitly rewritten as
$$
\left\{
 \begin{array}{ll}
\phantom{-}  q_{12} (1 - \gamma_1^+) - \mu_1 \,\, \lambda_1^+ \gamma_1^+ &= 0 \\ \\
- q_{21} (1 - \gamma_1^+) - \mu_2 \, \lambda_1^+ + \frac{1}{2}\sigma_2^2 \, (\lambda_1^+)^2 &=0
 \end{array}\right.
$$
and selecting the solution with $\lambda_1^+<0$ we have that 
$$
\lambda_1^+	= \frac{\mu_2}{\sigma_2^2}-\frac{q_{12}}{\mu_1}
    -\frac{1}{\mu_1\,\sigma_2^2} \, \delta_1 ; \quad
\gamma_1^+	= -\frac{1}{2}\frac{\mu_2}{\mu_1}\frac{q_{12}}{q_{21}}
    -\frac{q_{12}^2 \sigma_2^2}{4\,q_{21}\mu_1^2}
    -\frac{1}{2 \mu_1^2}\frac{q_{12}}{q_{21}} \, \delta_1 \nonumber
$$
where $\delta_1 = \sqrt{\mu_1^2 \mu_2^2+2 \mu_1 \mu_2 q_{12} \frac{\sigma_2^2}{2}
		+4 \mu_1^2 q_{21}\frac{\sigma_2^2}{2} + q_{12}^2 \frac{\sigma_2^4}{4}}$.
Solving the system (\ref{eq:vec.sys.eq.Ik}) that is explicitly rewritten as
$$
\left\{
 \begin{array}{ll}
  q_{12} + \mu_1 \lambda_{12}^- &= 0 \\ \\
  q_{21} + \mu_2 \lambda_{21}^- - \frac{1}{2} \sigma_2^2 \, \lambda_{12}^- \, \lambda_{21}^-
    -\frac{1}{2} \sigma_2^2 (\lambda_{21}^-)^2 &=0
 \end{array}\right.
$$
we get the following solutions 
$$
\lambda_{12}^- = -\frac{q_{12}}{\mu_1}; \quad \quad
\lambda_{21}^- = \frac{\mu_2}{\sigma_2^2} + \frac{q_{12}}{2\mu_1}
	+ \frac{1}{\mu_1\,\sigma_2^2} \, \delta_2
$$
where $\delta_2 = \sqrt{2 q_{21} \, \mu_1^2 \, \sigma_2^2 
		    + \left[ \mu_1 \, \mu_2 + q_{12} \frac{1}{2} \sigma_2^2 \right]^2}$.

Over the interval $I_1$, the stationary distribution is given by
$$\left(\begin{array}{l}\Pi_1(z) \\ \Pi_2(z)\end{array}\right) = 
  \matP_1 \, \matGamma_1^+ e^{ z \, \lambda_1^+} \, c_1^+ 
+ \matP_1 \, \matGamma_1^- e^{-z \, \matLambda_1^-}\, \vecc_1^{\, -} \ .$$
Having $\Pi_2(0)=0$ we can solve for $c_1^+ = -c_{12}^-$ 
and with $\Pi_1(b(1))=\pi_1$ we have
\begin{equation}\label{eq:sol.I0.case1}
\left(\begin{array}{l}\Pi_1(z) \\ \Pi_2(z)\end{array}\right) = 
 \matP_1 
    \, \left[ \hat{\matGamma}_1^+ e^{ z \, \matLambda_1^+}
    - \matGamma_1^- e^{-z \, \matLambda_1^-} \right] 
    \, \matC ^{-1}
 \matP_1^{-1}
    \left(\begin{array}{c} \pi_1 \\ \Pi_2(b(1))\end{array}\right) \ ,
\end{equation}
where $\hat{\matGamma}_1^+ = \left(\begin{array}{ll} 0 & \gamma_1^+ \\ 0 & 1 \end{array}\right)$,
$\matLambda_1^+=\lambda_1^+ \, \matI$ and 
$\matC =  \left[ \hat{\matGamma}_1^+ e^{ b(1) \, \matLambda_1^+}
    - \matGamma_1^- e^{-b(1) \, \matLambda_1^-} \right] $.

Over the interval $I_2$ let $\lambda_2^\pm$ be the negative solutions of the equations 
$ \frac{1}{2} \sigma_2^2 \, z^2 \mp \mu_2 \, z - q_{21} = 0$.
Define 
$\Theta_2=(\lambda_2^+ + \lambda_2^-)/2=-(\sqrt{\mu_2^2+2 q_{21} \sigma_2^2})/\sigma_2^2$ 
and
$\Delta_2=(\lambda_2^+ - \lambda_2^-)/2 = \mu_2/\sigma_2^2$,
we can write the solution for $z \in I_2$ with boundary condition $\Pi_2(b(2))=\pi_2$ as
\begin{equation}\label{eq:sol.I1.case1}
\Pi_2(z) = 
\pi_2 - e^{\Delta_2(z-b(1))} 
    \frac{\sinh(\Theta_2(b(2)-z))}{\sinh(\Theta_2(b(2)-b(1)))} (\pi_2-\Pi_2(b(1))) \ .
\end{equation}
In the equation above we already used the continuity condition for $z=b(1)$ by imposing the
boundary value at $b(1)$ to be equal to $\Pi_2(b(1))$.

The solution is completely identified by solving for the value of $\Pi_2(b(1))$ that assures 
the differentiability of $\Pi_2(z)$ at $z=b(1)$.

We have that for $z \in I_1$
$$
\vec \Pi'(z) = \matP_1 
    \, \left[ \hat{\matGamma}_1^+ \, \matLambda_1^+ \, e^{ z \, \matLambda_1^+}
    + \matGamma_1^- \, \matLambda_1^- \, e^{-z \, \matLambda_1^-} \right] 
    \, \matC ^{-1}  \matP_1^{-1}
    \left(\begin{array}{c} \pi_1 \\ \Pi_2(b(1))\end{array}\right) 
$$
and for $z \in I_2$ 
$$
\Pi'_2(z) = -\Delta_2 \, (\pi_2 - \Pi_2(z))
    + e^{\Delta_2 (z-b(1))} \Theta_2 
	\frac{\cosh(\Theta_2(b(2)-z))}{\sinh(\Theta_2(b(2)-b(1)))}
	(\pi_2-\Pi_2(b(1))) \ .
$$
To simplify the resulting expression define 
$$k_i = \vec e^{\, \t}_2 \,
\matP_1 
    \, \left[ \hat{\matGamma}_1^+ \, \matLambda_1^+ \, e^{ b(1) \, \matLambda_1^+}
    + \matGamma_1^- \, \matLambda_1^- \, e^{-b(1) \, \matLambda_1^-} \right] 
    \, \matC^{-1}  \matP_1^{-1}
\, \vec e_i, \quad i=1,2 \ ,$$
where $\vec e_i$ is the $i$-th column vector of the canonical base
in $\R^2$, and 
$$k_3=  \Theta_2 \coth(\Theta_2(b(2)-b(1))) \ ,$$
then the value of $\Pi_2(z)$ at the boundary $b(1)$ is given by
\begin{equation}\label{eq:const.case1}
 \Pi_2(b(1)) = \frac{\pi_2(k_3-\Delta_2)-\pi_1 k_1}{k_2 + k_3} \ .
\end{equation}

Notice that in this case the function $\Pi_1(z)$ is not continuous at $z=0$ 
where it gives the probability to find the system empty when the environment 
is found in state $1$, i.e. 
$$\Pr(Z^*=0,J^*=1)=\Pi_1(0)=
\vec e^{\, \t}_1 \matP_1 
    \, \left[ \hat{\matGamma}_1^+ - \matGamma_1^- \right] 
    \, \matC^{-1} \matP_1^{-1}
    \left(\begin{array}{c} \pi_1 \\ \Pi_2(b(1))\end{array}\right) \ .$$
\item[Case $\sigma_1>0;$ $\mu_2>0, \, \sigma_2=0;$ $\kappa<0$:]
For the interval $I_1$ we have that
$$
\left\{\begin{array}{ll}
 \frac{1}{2} \sigma_1^2 \, \Pi_1''(z) - \mu_1 \, \Pi'_1(z) 
    - q_{12} \, \Pi_1(z) + q_{21} \, \Pi_2(z) =& 0 \\ 
    \\		
 \phantom{\frac{1}{2} \sigma_2^2 \, \Pi_2''(z)} - \mu_2 \, \Pi'_2(z) 
    + q_{12} \, \Pi_1(z) - q_{21} \, \Pi_2(z) =& 0
\end{array} \right. 
$$
and for the interval $I_2$ we have 
$$
 - \mu_2 \, \Pi'_2(z) 
    - q_{21} \, \Pi_2(z) = - q_{12} \pi_1 \ .
$$
In addition we have the boundary conditions $\Pi_i(0)=0$ and
$\Pi_i(b(i))=\pi_i=q_{3-i,i}/(q_{12}+q_{21})$, $i=1,2$ and the continuity condition
$\Pi_2(b(1)-)=\Pi_2(b(1)+)$.

Having $\mu_2>0$ it follows that $E^+=\{1,2\}$ and $E^-=\{1\}$.
In a similar way as in the previous case we have 
$\matGamma_1^+ = \matI^+$ and 
$\matGamma_1^- = (1,\gamma_2^-)^\t$ where
$\gamma_2^- = \Pr_2\{\inf_{t>0}\{X(t)\}=0\}$.
Solving the system (\ref{eq:vec.sys.eq.Ik}) that is explicitly rewritten as
$$
\left\{
 \begin{array}{ll}
  q_{12} - \mu_1 \lambda_{12}^+ - \frac{1}{2} \sigma_1^2 \, \lambda_{21}^+ \, \lambda_{12}^+
    -\frac{1}{2} \sigma_1^2 (\lambda_{12}^+)^2 &=0\\ \\
  q_{21} - \mu_2 \lambda_{21}^+ &= 0
 \end{array}\right.
$$
we get the following solutions 
$$
\lambda_{12}^+ = -\frac{\mu_1}{\sigma_1^2} - \frac{q_{21}}{2\mu_2}
	+ \frac{1}{\mu_2 \, \sigma_1^2} \delta_1 ; \quad \quad
\lambda_{21}^+ = \frac{q_{21}}{\mu_2}
$$
where $\delta_1 = \sqrt{2 q_{12} \, \mu_2^2 \,  \sigma_1^2 
		    + \left[ \mu_2 \, \mu_1 + q_{21} \frac{1}{2} \sigma_1^2 \right]^2}$.

Solving the system (\ref{eq:vec.sys.eq.Ik}) that is explicitly rewritten as
$$
\left\{
 \begin{array}{ll}
  - q_{21} (1 - \gamma_2^-) + \mu_1 \, \lambda_1^- 
	+ \frac{1}{2}\sigma_1^2 \, (\lambda_1^-)^2 &=0 \\ \\
  \phantom{-} q_{21} (1 - \gamma_2^-) + \mu_2 \, \lambda_1^- \, \gamma_2^- &= 0
 \end{array}\right.
$$
and selecting the solution with $\lambda_1^-<0$ we have that 
$$
\lambda_1^-	= \frac{-\mu_1}{\sigma_1^2}+\frac{q_{21}}{\mu_2}
    -\frac{1}{\mu_2\,\sigma_1^2} 
	\delta_2 ; \quad
\gamma_2^-	= -\frac{1}{2}\frac{\mu_1}{\mu_2}\frac{q_{21}}{q_{12}}
    -\frac{q_{21}^2 \sigma_1^2}{4\,q_{12}\mu_2^2}
    +\frac{1}{2 \mu_2^2}\frac{q_{21}}{q_{12}} \delta_2	
$$
where $\delta_2 = \sqrt{\mu_2^2 \mu_1^2 + 2 \mu_2 \mu_1 q_{21} \frac{\sigma_1^2}{2}
		+ 4 \mu_2^2 q_{12}\frac{\sigma_1^2}{2} + q_{21}^2 \frac{\sigma_1^4}{4}}$.

Over the interval $I_1$, the stationary distribution is given by
$$
\left(\begin{array}{l}\Pi_1(z) \\ \Pi_2(z)\end{array}\right) = 
   \matP_1 \, \matGamma_1^+ e^{ z \, \matLambda_1^+}\, \vecc_1^{\, +} 
+  \matP_1 \, \matGamma_1^- e^{-z \, \lambda_1^-} \, c_1^- \ .
$$
Having $\vec\Pi(0)=\vec0$ we can solve for $\vecc_1^{\, +}=-c_1^- \, \matGamma_1^-$
and knowing that $\Pi_1(b(1))=\pi_1$ we get
\begin{equation}\label{eq:sol.I0.case2}
 \left(\begin{array}{l}\Pi_1(z) \\ \Pi_2(z)\end{array}\right) = 
  \frac{\pi_1}{c_1} \matP_1 ( e^{-z \, \matLambda_1^-} - e^{ z \, \matLambda_1^+}) \matGamma_1^-
\end{equation}
where $\matLambda_1^-=\lambda_1^- \, \matI$ and  
$c_1 = \vec e^{\, \t}_1 \,  \matP_1 ( e^{-b(1) \, \matLambda_1^-} - e^{ b(1) \, \matLambda_1^+})
\matGamma_1^-$.

Over the interval $I_2$ the solution is simply
\begin{equation}\label{eq:sol.I1.case2}
\Pi_2(z) = \pi_2 - e^{-\frac{q_{21}}{\mu_2} (z-b(1))} 
    (\pi_2 - \Pi_2(b(1)))\ ,
\end{equation}
where $\Pi_2(b(1))$ is known by continuity from equation (\ref{eq:sol.I0.case2}) and we used
the fact that $\pi_2 = \pi_1 \, q_{12}/q_{21}$.

Notice that in this case the function $\Pi_2(z)$ is not continuous at $z=b(2)$ 
where it gives the probability to find the system saturated when the environment 
is found in state $2$, i.e.
$$\Pr(Z^*=b(2),J^*=2) = \pi_2-\Pi_2(b(2-)) 
    =e^{-\frac{q_{21}}{\mu_2} (z-b(1))} 
    (\pi_2 - \Pi_2(b(1))) \ .$$
\end{description}
    \section{Synchronized Barrier Strategies for Dividend Payout}\label{sec:dividend-payout}
In this section we look at an application of the model presented in Section \ref{sec.model}
to the problem of computing the expected dividend payouts of a company. 
We assume that the company profit fluctuates according to the Markov modulated Brownian motion
$(X(t),J(t))$ where $X(t)$ is as in (\ref{eq:X.proc}) and $J(t)$ is the environment process as
introduced in Section \ref{sec.model}.

The model is a direct extension of the one studied in \cite{gerber:shiu:2004} where for the profit
process it was chosen a free Brownian motion. the introduction of the external environment $J(t)$
from a modeling perspective helps in adapting the process to the case where the profit process
may depend on external environmental situation, such as for example seasonal-dependent activities.

We assume that the company select for each state of the environment $J(t)$ a barrier level $b(J(t))$
and decides to pay dividends as soon as its surplus process reaches that level. 
In this way the surplus process behaves exactly as the Markov Modulated two-sided reflected Brownian
motion $Z(t)$ up to the moment of the first hitting time of the lower barrier, i.e. $\tau = \inf\{t
\geq 0: Z(t) = 0\}$, that corresponds to the ruin time for the company.

The non-discounted total dividends paid up to time $\tau$ is directly given by $U(\tau)$,
that is the upper regulator process computed at the ruin epoch.
In general, given the discount rate $\delta>0$, this amount is given by
\begin{equation}
 U = \int_0^\tau e^{-\delta\,t} U(dt) \ .
\end{equation}

$U$ is a random quantity that depends on the path realization of the process $(X,J)$, as
well as the selected barriers, $\{b(j)\}_{j\in\Jspace}$, and the start-up condition of the company
$(z,j)=(Z(0),J(0))$. 
The aim of the company is to compute to then optimize the expected value of total 
discount dividend paid over its time-horizon. We denote this quantity by $V(z,j)$, when then
starts at time $0$ with initial capital $z$ and in an environment state $j$. The formal definition
is given by
\begin{equation}
 V(z,j) = \E[ U | Z(0) = z, J(0) = j] = \E_{(z,j)}[ U ] \ .
\end{equation}

In the following we show how to heuristically determine the system of differential equation that 
admits as solution the desired quantity $V(z,j)$, in the next sections we show how to get 
the same system of equations in a more rigorous way.

Assuming that the starting position $z$ is far from the barrier level $b(j)$ we can assume that
in a relatively short interval $\Delta t$ the surplus process does not reach the reflecting barrier
such that the following relation holds
\begin{eqnarray*}
\E_{(z,j)}\Big[ V\big(Z(\Delta t), J(\Delta t)\big) \Big] 
&=& e^{\delta \, \Delta t} \, V(z,j) 
    - e^{\delta \, \Delta t} \, \E_{(z,j)}\Big[\int_0^{\Delta t} e^{-\delta \, t} U(dt)\Big] \ .
\end{eqnarray*}

In addition we have that
\begin{eqnarray}
\E_{(z,j)}\Big[\int_0^{\Delta t} e^{-\delta \, t} U(dt)\Big]
&=& \sum_{k\in\Jspace} q_{jk} \, (z-b_k)^+ \, \Delta t + o(\Delta t) \nonumber
\end{eqnarray}
so that using the first order Taylor expansion of $e^{\delta \, \Delta t}$, we finally get 
$$
\E_{(z,j)}\Big[ V\big(Z(\Delta t), J(\Delta t)\big) \Big]
= V(z,j) + \delta \, V(z,j) \, \Delta t 
    - \sum_{k\in\Jspace} q_{jk} \, (z-b_j)^+ \, \Delta t + o(\Delta t) \ .
$$

On the other side using the It\^o formula it follows 
\begin{eqnarray}
V\big(Z(\Delta t), J(\Delta t)\big) 
&=&  V\big(Z(0), J(0)\big) 
    + \mu(J(0)) \, V'\big(Z(0), J(0)\big) \, \Delta t \nonumber \\
&\phantom{=}& + \frac{1}{2} \sigma^2(J(0)) \, V''\big(Z(0), J(0)\big) \, \Delta t \nonumber\\
&\phantom{=}& + \sum_{k\in\Jspace} q_{J(0)k} \, V(Z(0) \wedge b(k), k) \, 
		    \Delta t + o(\Delta t) \nonumber
\end{eqnarray}
and equating the two expressions above we get the following differential equations
\begin{equation}\label{eq:sys.v}
\frac{1}{2} \sigma^2(j) \, V''(z,j) 
    + \mu(j) \, V'(z,j) 
    - \delta \, V(z,j) 	+ \sum_k q_{jk} [V(z \wedge b(k),k)
			+ (z-b(k))^+ ]= 0 \ . 
\end{equation}

\subsection{Regularity of $V(z,j)$}
The derivation of equation (\ref{eq:sys.v}) has been done by implicitly assuming regularity
condition of the unknown function $V(z,j)$, i.e. that it has second derivative on the interval
$(0,b(j))$ with the exception of at most isolated points. In this section we prove that indeed
$V(z,j)$ does admits second derivative and again that it satisfies equation (\ref{eq:sys.v}).
In the following we assume that $\sigma(j) > 0$ as, according to (\ref{eq:sys.v}), in the case it
was zero we would need only the first derivative of $V(z,j)$. The treatment below can be easily
adapted to handle this case.

Define $T_b = \min_{j\in\Jspace} \{ \inf_{t\geq0} \{(Z(t),J(t)) = (b(j),j)\}\}$ and
$T_0 = \inf\{t \geq 0, X(t) = 0\}$
we have that
$$V(z,j)=\E_{(z,j)}[e^{-\delta \, T_b} \, (V(b(J(T_b)),J(T_b)) - \Delta X(T_b)) \, 1\{T_b < T_0\}]$$
with  $\Delta X(t)= X(t)- X(t-)$.

Define the stopping time $\tau(h) = T_{z-h} \wedge T_{z+h} \wedge T_J$ with
$T_J=\inf\{t>0,J(t-) \not= J(t)\}$ and $T_{z \pm h} = \inf\{t>0, X(t) = z \pm h\}$, we have that
\begin{eqnarray}\label{eq:V(z,j)}
V(z,j)
&=& \E_{(z,j)}[e^{-\delta \, \tau(h)} \, (V(X(\tau(h)),J(\tau(h)))+ \Delta X(\tau(h)))] \\
&=&  g_+(h) \, V(z+h,j) + g_-(h) \, V(z-h,j) 
    + \sum_{k\in\Jspace} \tilde V_J(h,k) \, \frac{q_{jk}}{q_j} \, g_J(h) \nonumber
\end{eqnarray}%
where $g_J(h) = \Pr_{(z,j)}\{\tau(h)=T_J \}$,  
$g_\pm(h) = \E_{(z,j)}[e^{-\delta \, \tau(h)}; \, \tau(h)=T_{z \pm h}]$,
 $q_j = \sum_{k\not=j} q_{jk}$, and  %
$$
\tilde V_J(h,k) 
= \E_{(z,j)}[e^{-\delta \,T_J} (V(X(T_J),k) + \Delta X(T_J)) |\tau(h)=T_J, J(T_J) = k] \ .
$$

To compute $g_J(h)$ we have that
$$
g_J(h)
= \Pr_{(z,j)}\{\sup_{0\leq t \leq T_J} |X(t)-z|<h \} 
= \Pr_0\{T_{|h|}>T_J \} 
= 1-\E_0[e^{-q_j \, T_{|h|}}] \ ,
$$
where $T_{|h|}=\inf_{t \geq 0}\{|Y(t)| = h\}$ is the hitting time of the
set $(-h,h)^c$ of the process $Y(t)$ that is a Brownian motion with drift $\mu(j)$ and
diffusion coefficient $\sigma(j)$. It is known, see \cite{chung:2002}, that
\begin{equation}\label{eq:Lapl.hit.time}
\E_0[e^{- \lambda \, T_{|h|}}] 
= 
{\cosh}\left(\frac{h \mu}{\sigma}\right)
{\rm sech}\left(\frac{h \mu \sqrt{2 \lambda + \mu^2}}{\sigma}\right)
= 
1 - \lambda \, h^2 + o(h^2) \ ,
\end{equation}
and from this it follows that 
$g_J(h) = 1-\E_0[e^{-q_j \, T_{|h|}}]  = \frac{q_j}{\sigma^2} h^2 + o(h^2)$.

To compute $g_\pm(h)$ we have that
\begin{eqnarray*}
g_\pm(h) 
&=& \E_{(z,j)}[\E[e^{-\delta \, \tau(h)}; \, \tau(h)=T_{z \pm h}|T_J]] \\
&=& \int_{t=0}^\infty 
	\E[e^{-\delta \, \tau(h)}; \, \tau(h)=T_{z \pm h}|T_J=t] \, q_j e^{-q_j \, t} 
	\, dt \\
&=& \int_{t=0}^\infty 
	\E[e^{-\delta \, \tau(h)}\E[1\{\tau(h)=T_{z \pm h}\}|\tau(h),\, T_J=t]|T_J=t] \, q_j e^{-q_j
\, t} 
	\, dt
\end{eqnarray*}
and since the exit location $X(T_{z+h} \wedge T_{z-h})$ is independent of the exit time
$T_{z+h} \wedge T_{z-h}$, see \cite{stern:1977}, assuming $\mu > 0$, we have that
$$ \E[1\{\tau(h)=T_{z \pm h}\}|\tau(h),\, T_J=t] =  c_\pm \, 1\{\tau(h) < t\} $$
where 
$c_\pm = \frac{1}{2} \pm \frac{1}{2}
 (\exp(\frac{2 \mu }{\sigma^2}\, h)-1)/(\exp(\frac{2 \mu }{\sigma^2}\, h)+1)$,
therefore 
\begin{eqnarray*}
g_\pm(h) 
&=& c_\pm \, \int_{t=0}^\infty 
	\E[e^{-\delta \, \tau(h)} \, 1\{\tau(h) < t\}|T_J=t] \, q_j e^{-q_j \, t} 
	\, dt \\
&=&  c_\pm \, \int_{t=0}^\infty  \int_{x=0}^t  
	e^{-\delta \, x}\, \Pr\{T_{|h|} \in dx\} \, q_j e^{-q_j \, t} \, dt 
\, = \, c_\pm \, \E_0[e^{-(\delta+q_j) \, T_{|h|}}] \\
&=&  c_\pm \,
{\cosh}\left(\frac{2\mu}{\sigma} \frac{h}{2}\right)
{\rm sech}\left(\frac{2\mu}{\sigma} h \sqrt{\frac{(\delta+q_j)}{2\mu^2} +\frac{1}{4}}\right)
\end{eqnarray*}

Last term in (\ref{eq:V(z,j)}) is positive, it follows that 
\begin{eqnarray*}
V(z,j)
&\leq& g_+(h) \, V(z+h,j) + g_-(h) \, V(z-h,j) \\
&\leq& G_+(h) \, V(z+h,j) + G_-(h) \, V(z-h,j)
\end{eqnarray*}
where $G_\pm(h) = g_\pm(h)/(g_+(h)+g_-(h))$.
Since $G_+(h) + G_-(h) =1$ 
and $G_-(h) \leq \frac{1}{2} \leq G_+(h)$ the follow inequality holds
$$V(z,j)-V(z+h,j) \leq V(z-h,j)-V(z,j)$$
that implies that $V(z,j)$ is continuous and concave in $(0,b(j))$, see 
\citet[Ch. IV.2 page. 326]{courant:1988}. 

Rearranging terms in (\ref{eq:V(z,j)}) and using that $1 - (g_+(h) + g_-(h)) = o(h)$ 
we have
$$
g_-(h) [V(z,j) - V(z-h,j)] = g_+(h) [V(z+h,j) - V(z,j)] + o(h)  \ ,
$$
and dividing it by $h$, letting $h \to 0$ and having $g_\pm(h) \to 1/2$ as $h\to0$
we finally get 
$$V'(z-,j)=V'(z+,j),$$
i.e. the function $V(z,j)$ is differentiable in $(0,b_j)$.
Define
$$\lim_{h \to 0} \frac{V(z+h,j) - 2 V(z,j) + V(z-h,j)}{h^2} = \psi(z)$$
with 
\begin{eqnarray*}
&\psi(z,j) 
= \lim\limits_{h\to0} & \Big[
  \frac{2 g_+(h) - 1}{h} \frac{V(z+h,j)-V(z,j)}{h} \\
&& + \frac{1 - 2 g_-(h)}{h} \frac{V(z,j)-V(z-h,j)}{h}  
   + \frac{2 g_-(h) + 2 g_-(h)-2}{h^2} V(z,j) \\
&& + \frac{h^2 \sum_k q_{jk} \, [V(z \wedge b_k,k)+(z-b_k)^+] + o(h^2)}{h^2} \Big] \ ,
\end{eqnarray*}
where we used the fact that $\tilde V_J(h,k) \to V(z \wedge b_k,k)+(z-b_k)^+$ as $h\to0$ by  bounded
convergence.
Having that $(2 g_\pm(h) - 1)/h \to \pm \mu/\sigma^2$ 
and $(2 g_-(h) + 2 g_-(h)-2)/h^2 \to 2 \left(\delta + q_j\right)/\sigma^2$
as $h\to0$  we get that
\begin{eqnarray}\label{eq:sys.v.second}
 \psi(z,j) 
&=& \frac{2\mu}{\sigma^2} V'(z,j) 
- \frac{2 \left(\delta +q_j\right)}{\sigma ^2} V(z,j)
+ \sum_{k \not = j} q_{jk} \, [V(z \wedge b_k,k)+(z-b_k)^+] \ .
\end{eqnarray}
Using Schwarz's Theorem, \cite{titchmarsh:1976}, we get that 
$V(z,j)$ is twice differentiable and $V''(z,j)=\psi(z,j)$. In addition (\ref{eq:sys.v.second})
coincides with (\ref{eq:sys.v}).

\subsection{Boundary conditions}
To determine the value of $V(z,j)$ is necessary to add to the differential equations 
(\ref{eq:sys.v})
two boundary conditions for each $j\in\Jspace$, at the barriers $0$ and $b(j)$.

It is obvious that $V(0,j)=0$, for any $j\in\Jspace$ and in addition we have
$$ V'(b(j),j)=1. $$
This equation that can be found for the non-modulated case in \cite{gerber:shiu:2004} has the
following
explanation.

Assume $\Delta z$ is small, starting at $b(j)-\Delta z$ we will touch the barrier $b(j)$
at time $T_{\Delta z}$ so that $e^{-\delta \, T_{\Delta z}} = 1 + o(T_{\Delta z})$.
Defining $\tau = T_J \wedge T_0$, the shortest time between a change of state for $J$ or the ruin
epoch, we have that the process $Z(t)$ will have the same dynamic of a single reflected 
$(\mu(j),\sigma(j))-$Brownian motion at the barrier $b(j)$ 
for $0\leq t \leq \tau$ that we denote by $Y(t)$.
The upper regulator process is known to have the following expression
$$U_z(t) = z + \sup\{0\leq s \leq t: Y(t) \vee b(j) - z\}$$
as  $0\leq t \leq \tau$ when $Z(0)=z$, notice that by definition $Y(0)=0$.
Since $\int_0^t e^{-\delta s} \, U(ds) = U(t) + o(t)$, it follows that
$$
V(b(j),j) - V(b(j)-\Delta z, j) 
= \Delta z + \E[1\{M(t) < \Delta z\} (M(t) - \Delta z)] + o(\E[T_{\Delta z}]),
$$
where $M(t) = \sup\{0\leq s \leq t: Y(s) \}$ and $T_{\Delta z} = \inf\{t>0, M(t) \geq \Delta z\}$.
Having that 
$$-\Delta z \, \Pr(T_{\Delta z} < \tau)
\leq \E[1\{M(t) < \Delta z\} (M(t)- \Delta z)] \leq 0 $$
we can get the following bounds
$$
1 - \Pr(T_{\Delta z} > \tau) + \frac{o(\E[T_{\Delta z}])}{\Delta z}
\leq \frac{V(b(j),j) - V(b(j)-\Delta z, j)}{\Delta z} \leq 1 + \frac{o(\E[T_{\Delta z}])}{\Delta z},
$$
and taking the limit for $\Delta z \to 0$ and using the fact that 
$\Pr(T_{\Delta z} > \tau) \to 0$ and
$o(\E[T_{\Delta z}])/\Delta z \to 0$
we obtain the result.
    \subsection{Expected dividend payout - Case $|E|=2$}\label{div.payout.ex}
Assuming $q_{12}=q_{21}=\lambda$, we have that for $z \in I_1$ 
the system of differential equations (\ref{eq:sys.v}) reduces to 
$$
\left\{\begin{array}{ll}
 \frac{1}{2} \sigma^2(1) V''(z,1) + \mu(1) \, V'_1(z,1) 
    - (\lambda+\delta) V(z,1) + \lambda \,  V(z,2) =& 0 \\ 
    \\		
 \frac{1}{2} \sigma^2(2) V''(z,2) + \mu(2) \, V'_1(z,2) 
    + \lambda \,  V(z,1) - (\lambda+\delta) V(z,2)  =& 0 
\end{array} \right.
$$
and for $z \in I_2$ it reduces to 
$$
\hspace{1.1cm} \frac{1}{2} \sigma^2(2) V''(z,2) + \mu(2) \, V'(z,2) 
- (\lambda+\delta) V(z,2) = 
- \lambda (z - b(1) + V(b(1),1)) \ ,
$$
with boundary conditions
\begin{eqnarray}
V(0,1) &=& V(0,2) = 0 \\
V'(b(1),1) &=& V'(b(2),2) = 1
\end{eqnarray}
and regularity conditions
\begin{eqnarray}
 V(b(1+),2)  &=& V(b(1+),2) \\
 V'(b(1+),2) &=& V'(b(1+),2) \ .
\end{eqnarray}

Let $\matF(z)=( \matX_1 \, , \, \matX_2) \, e^{\matJ \, z} \, ( \matX_1^{-1} \, , \,
-\matX_2^{-1})^\t$ be the matrix solution of the system of differential equation
$$
 \frac{1}{2} \, \matDelta^2_{\sigma} \, \matF''(z) + \matDelta_{\mu} \, \matF'(z) +
   \left( \begin{array}{rr}
    - (\lambda+\delta) &  \lambda \\ \lambda &  - (\lambda+\delta) 
  \end{array} \right) \ \matF(z) = \matO
$$
for $z \in I_1$, with the null condition at the 0 barrier, i.e. $\matF(0)= \matO$ and
defining by $z_1 \leq z_2 \leq z_3 \leq z_4$ the four solution of the algebraic equation
\begin{eqnarray*}
  z^4 
&+& z^3 \left[\frac{2\mu(1)}{\sigma^2(1)} + \frac{2\mu(2)}{\sigma^2(2)}\right] \\
&+& z^2 \left[\frac{2\mu(1)}{\sigma^2(1)} \, \frac{2\mu(2)}{\sigma^2(2)} 
	+ (\delta-\lambda) \left(\frac{2}{\sigma^2(1)}+\frac{2}{\sigma^2(2)}\right)\right]\\
&+& z   \left[(\delta-\lambda) \frac{4(\mu(1)+\mu(2))}{\sigma^2(1) \, \sigma^2(2)}\right]
 = \frac{ 4 \delta (2\lambda-\delta)}{\sigma^2(1) \, \sigma^2(2)}
\end{eqnarray*}
the matrix $\matJ=\diag[z_1,z_2,z_3,z_4]$ and the matrices $\matX_1$ and $\matX_2$ are given by
\begin{eqnarray*}
\matX_1 &=& \left(\begin{array}{cc}
                 -\frac{2\lambda/\sigma^2(2)}{(z_1-z_2)(z_1-z_3)(z_1-z_4)} &
			-\frac{2\lambda/\sigma^2(2)}{(z_2-z_1)(z_2-z_3)(z_2-z_4)} \\ \\
		 -\frac{2 (\lambda-\delta)/\sigma^2(1) - z_1 (2 \mu(1)/\sigma^2(1) + z_1 )}
		      {(z_1-z_2)(z_1-z_3)(z_1-z_4)} &
			-\frac{2 (\lambda-\delta)/\sigma^2(1) - z_2 (2 \mu(1)/\sigma^2(1) + z_2)}
		      {(z_2-z_1)(z_2-z_3)(z_2-z_4)}
                \end{array}\right) \\
\matX_2 &=& \left(\begin{array}{cc}
                 -\frac{2\lambda/\sigma^2(2)}{(z_3-z_1)(z_3-z_2)(z_3-z_4)} &
			-\frac{2\lambda/\sigma^2(2)}{(z_4-z_1)(z_4-z_2)(z_4-z_3)} \\ \\
		 -\frac{2 (\lambda-\delta)/\sigma^2(1) - z_3 (2 \mu(1)/\sigma^2(1) + z_3 )}
		      {(z_3-z_1)(z_3-z_2)(z_3-z_4)} &
			-\frac{2 (\lambda-\delta)/\sigma^2(1) - z_4 (2 \mu(1)/\sigma^2(1) + z_4 )}
		      {(z_4-z_1)(z_4-z_2)(z_4-z_3)}
                \end{array}\right)
\ .
\end{eqnarray*}
The above expressions are a rearrangement of formulas obtained by using the software 
Mathematica\copyright.
We have that
$$
\begin{array}{llr}
\vec V(z) &= \matF(z) \, (k_1, \, k_2)^\t 
	&0 \leq z \leq b(1) \\ \\
V(z,2) &= k_3 \, f(z) + g(z) - \frac{1}{\lambda+\delta} V(b(1),1) 
	&b(1) < z \leq b(2) 
\end{array}
$$
where 
$$g(z)= \frac{\lambda}{\lambda+\delta} z
        + \frac{\lambda \, \mu(2)}{(\lambda+\delta)^2}
        - \frac{\lambda}{\lambda+\delta} b(1)
$$ 
and $f(z)$ being any solution of the differential system 
$$
\frac{1}{2} \sigma^2(2) f''(z) 
+ \mu(2) \, f'(z,2) 
- (\lambda+\delta) f(z) = 0 \ ,
$$
with $b(1) < z \leq b(2)$ and boundary condition $f'(b(2)) = \delta/(\lambda+\delta)$. 
We choose the special solution that has
$$f(b(2)) = - \frac{1}{\Theta} \, f'(b(2)) 
	=  -\frac{\delta}{\lambda+\delta} \frac{\sigma^2(2)}{\mu(2)} $$
that can be written as
$$f(z) = -\frac{\delta}{\lambda+\delta} \frac{\sigma^2(2)}{\mu(2)}
	\, \exp\Big(\Theta (b(2) -  z) \Big) \,
	 \cosh \Big(\Delta (b(2) - z)\Big) \ ,$$
with $\Theta = \frac{\mu(2)}{\sigma^2(2)}$ and 
$\Delta = \sqrt{\Theta^2 + \frac{2(\lambda+\delta)}{\sigma^2(2)}}$.

To solve for the constants, $k_1$, $k_2$ and $k_3$ we need to solve the following
system of equations
\begin{eqnarray*}
\vec V'(b(1)-) &=& ( 1 \, , \, V'(b(1)+,2) )^\t \\
V(b(1)-,2) &=& V(b(1)+,2)
\end{eqnarray*}
that is equal to 
\begin{eqnarray}
\matF'(b(1)) \, (k_1 \, , \, k_2)^\t 
&=& (1 \, , \, k_3 \, f'(b(1)) + g'(b(1)) )^\t \label{eq:deriv}\\
e_2^\t \, \matF(b(1)) \, (k_1 \, , \, k_2)^\t &=& k_3 \, f(b(1)) + g(b(1))
 - \frac{1}{\lambda+\delta} e_1^\t \, \matF(b(1)) \, (k_1 \, , \, k_2)^\t   \label{eq:cont}
\end{eqnarray}

Using (\ref{eq:deriv}) we get
\begin{eqnarray*}
 (k_1 \, , \, k_2)^\t 
&=&
\matF'(b(1))^{-1} \, (1 \, , \, k_3 \, f'(b(1)) + \lambda/(\lambda+\delta))^\t
\end{eqnarray*}
and using the (\ref{eq:cont}) we get 
\begin{eqnarray*}
k_3
&=& - \frac{%
\lambda \, \mu(2)
 - \left( 1 \, , \, \lambda+\delta \right) \, \matF(b(1)) \, 
\matF'(b(1))^{-1} \, (\lambda+\delta \, , \, \lambda)^\t %
}
{%
 (\lambda+\delta)^2 \, f(b(1)) - (\lambda+\delta) \, f'(b(1)) %
 \left( 1 \, , \, \lambda+\delta \right) \, \matF(b(1)) \, 
\matF'(b(1))^{-1} \, e_2 \ .
}
\end{eqnarray*}


\end{document}